\newtheorem{theorem}[subsection]{Theorem}
\newtheorem{proposition}[subsection]{Proposition}
\newtheorem{definition}[subsection]{Definition}
\newtheorem{lemma}[subsection]{Lemma}
\newcommand{\chara}{{\textup{char}}}
\newcommand{\N}{{\mathbb N}}
\newcommand{\Z}{{\mathbb Z}}
\renewcommand{\P}{{\mathbb P}}
\newcommand{\Q}{{\mathbb Q}}
\title{All solid rings}
\author{Jaime Benabent Guerrero}
\date{\today}
\def\@tocline#1#2#3#4#5#6#7{\relax
  \ifnum #1>\c@tocdepth 
  \else
    \par \addpenalty\@secpenalty\addvspace{#2}%
    \begingroup \hyphenpenalty\@M
    \@ifempty{#4}{%
      \@tempdima\csname r@tocindent\number#1\endcsname\relax
    }{%
      \@tempdima#4\relax
    }%
    \parindent\z@ \leftskip#3\relax \advance\leftskip\@tempdima\relax
    \rightskip\@pnumwidth plus4em \parfillskip-\@pnumwidth
    #5\leavevmode\hskip-\@tempdima
      \ifcase #1
       \or\or \hskip 1em \or \hskip 2em \else \hskip 3em \fi%
      #6\nobreak\relax
    \hfill\hbox to\@pnumwidth{\@tocpagenum{#7}}\par
    \nobreak
    \endgroup
  \fi}
\begin{document}
	
	\begin{abstract}
		We give an explicit characterization of all the solid rings, refining in this way a previous work by Bousfield and Kan in the 70s.
        As a consequence of our methods, we give criteria for finding the core of a ring and explicitly compute the core of some rings.
	\end{abstract}
	
	\maketitle

	
	\section{Introduction}
    \noindent


    In their seminal monograph, \cite{BK72}, Bousfield and Kan introduced the $R$-completion functor.
    This tool is characterized by the fact that a map between two spaces is a homology equivalence with coefficients in a ring $R$ if and only if the induced map between their respective $R$-completions is a homotopy equivalence.

    Their main interest at the time was to develop a way of isolating and studying the $p$-primary and rational information within the structure of a space.
    This is achieved by using coefficient rings such as $\Z/p\Z$ for a prime $p$, or a subring of the rational numbers, $\Q$.
    Following this decomposition, the arithmetic square, a technique previously developed by Sullivan, Zabrodsky, and others, is used to reintegrate this information, \cite{Sul70}.
    
    In this context, Bousfield and Kan also introduced the concepts of a solid ring and the core of a ring $R$, the maximal solid subring of $R$, noted $cR$, \cite{BK71}.
    They proved that for every commutative ring $R$ the $R$-completion is equivalent to the $cR$-completion.
    Furthermore, they established that any solid ring is essentially a combination of copies of finite rings $\Z/p\Z$ and subrings of $\mathbb{Q}$.
    This finding underscores that these are the crucial cases when working with the $R$-completion.
    This paper will provide a more precise and detailed classification of solid rings.\\

    Since its inception, the concept of solidity has permeated various areas of mathematics.
    \begin{itemize}
        \item In algebraic geometry, solid rings correspond to subterminal affine schemes, and their classification helps in turn to classify subterminal objects in the broader category of schemes, a task we undertake in subsequent work, \cite{BG25S}.

        \item A generalization of solidity through the concept of epimorphisms is the work of John R. Isbell is his series of papers titled Epimorphisms and Dominions.
        Storrer proved that, in the broader category of not necessarily commutative rings, solid rings are necessarily commutative, \cite[Proposition 1.3]{Sto73}. 
        
        \item The solid ring $c(\Q \times \prod\limits_{p \in \P} \Z/p\Z)$ is a \emph{meadow} in the sense of \cite{BHT09} a generalization of the concept of a field consisting of a ring with all weak inverses, this is a concept for number systems with division using only equations, allowing for $0^{-1} = 0$.
        It is in fact the \emph{meadowization} of $\Z$, \cite{5046853}.  

        \item In \cite{Gut15} Gutiérrez defined solid monoids, showing their bijection with smashing localizations and mapping colocalizations, and obtained more precise results for solid ring spectra in the stable category.

        \item 
        Indeed, many properties of symmetric monoidal categories can be described through the actions of solid semirings, see \cite{Ber18}.

        \item
        Solid rings also emerge in the context of Hopf monads, \cite{HL23}.\\
    \end{itemize}
    In their original work, Bousfield and Kan classified solid rings into four types.
    The first three families are (1) cyclic rings, (2) subrings of the rationals, and (3) product rings $\Z[J^{-1}] \times \Z/n\Z$, where $J$ is any set of primes that contains all prime factors of $n \geq 2$.
    The fourth type was given by any of the following two equivalent definitions:
    
    \begin{enumerate}
        \item[(4a)] Any colimit given by a diagram of rings of type (1), (2), or (3) that does not fall into any of the three types defined previously.

        \item[(4b)] The \emph{core} of the infinite product rings $\Z[J^{-1}] \times \prod\limits_{p \in K} \Z/p^{e_p}\Z$, where $K \subseteq J$ are any two nested infinite sets of primes and the exponents $\{e_p\}_{p \in K}$ are positive integers.
    \end{enumerate}

    While the first three types are straightforward, this fourth class of solid rings has remained comparatively obscure.
    The purpose of this paper is to revisit this classification and provide a more explicit and accessible characterization of this fourth type.\\



    
    The key insights on this paper are Proposition \ref{prp:co_solid_2}, which roughly says that in a given diagram of solid rings the arrows between them do not bring any new information, and Theorem \ref{thm:submain}, which characterizes the core of a given ring by the \emph{basic} solid rings that are contained in our ring:
    
    \begin{definition}
        A solid ring is called \emph{basic} if it is either $\Z/n\Z$ for a positive integer $n$ or $\Z[1/p] \times \Z/p^a\Z$ for a prime $p$ and a nonnegative integer $a$.
    \end{definition}
    
    From this perspective, we show that solid rings naturally decompose into two categories:
    \begin{itemize}
        \item On the one side, we have the finite \emph{basic} solid rings, given by $\Z/n\Z$ for a positive integer $n$, which account for all solid rings of type (1).
    
        \item On the other side, we have arbitrary coproducts of infinite \emph{basic} solid rings, $\Z[1/p] \times \Z/p^a\Z$ for a prime $p$ and a nonnegative integer $a$.
        In such coproducts, every prime appears at most once.
        These rings encompass all solid rings of type (2), (3), or (4), depending on whether the number of positive exponents is zero, finite, or infinite, respectively.
    \end{itemize}

    Moreover, such a coproduct admits a natural interpretation as the colimit of a tower of inclusions, offering a concrete and structured description of solid rings of type (4).\\

    The coproduct interpretation provides a systematic way to compute the core of any ring as the coproduct of all the basic solid rings it contains.
    For a ring $R$ with $c:=\chara(R) \not= 0$, the core is the finite basic solid ring $\Z/c\Z$.
    In contrast, for a ring of characteristic zero, the core is determined by the infinite basic solid rings contained within it.

    We apply this framework to compute the core of several rings, including an explicit determination of the core of an infinite product ring of the form described in (4b), viewed as a subring of the original ring.\\

    Let $\P$ be the set of prime numbers.
    In Subsection \ref{ss:inf_prod} we are able to explicitly compute the core of the ring
    $$
    \Q\times \prod\limits_{p \in \P} \Z/p\Z,
    $$
    which consists precisely of those elements $(r/s, \{a_p\}_{p \in \P})$ such that only finitely many of the ${a_p}'s$ differ from $r/s$ modulo $\Z/p^{e_{p}}\Z$.
    Note that this condition is well-defined, as the quotient is undefined in only finitely many factors; specifically, those corresponding to the prime divisors of $s$, which are finite in number.
    This answers a question posed by John Baez in the comments section of \cite{Bae21}.

    \subsection{Acknowledgments}
    The author thanks Professor Ramón Flores for many helpful discussions and advice in the writing of this paper.
	
	\section{Solid rings}
	\noindent

    \begin{definition}
		A ring\footnote{Our rings are always assumed to be commutative, associative and unital.} $R$ is called \emph{solid} if for any other ring $S$ there exists at most one morphism $R \to S$.
    \end{definition}

    \begin{definition}
        Let $R$ be a ring.
        \begin{enumerate}
            \item An element $x \in R$ is called \emph{absolute} if for any other ring $S$ the image of $x$ under any morphism $R \to S$ is fixed.
            
            \item The \emph{core} of $R$, denoted $cR$, is the subring of absolute elements.
        \end{enumerate}
	\end{definition}

    \begin{lemma}
        A ring is solid if and only if all of its elements are absolute.
    \end{lemma}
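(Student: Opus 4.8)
The plan is to prove both implications directly from the definitions, using the fact that a ring morphism $R \to S$ is determined by where it sends the elements of $R$. For the forward direction, suppose $R$ is solid, and let $x \in R$ be arbitrary; I want to show $x$ is absolute. Fix any ring $S$ and any two morphisms $f, g \colon R \to S$ (if there are none, the condition on $x$ is vacuous). Solidity says there is at most one morphism $R \to S$, so $f = g$; in particular $f(x) = g(x)$. Hence the image of $x$ under any morphism out of $R$ is independent of the chosen morphism, i.e. $x$ is absolute. Since $x$ was arbitrary, every element of $R$ is absolute.

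For the converse, suppose every element of $R$ is absolute, and let $S$ be any ring with two morphisms $f, g \colon R \to S$. For each $x \in R$, absoluteness of $x$ gives $f(x) = g(x)$ (both are "the" image of $x$, which is fixed). Since $f$ and $g$ agree on all of $R$, we get $f = g$, so there is at most one morphism $R \to S$, and $R$ is solid.

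The main subtlety to get right is the quantifier bookkeeping in the definition of "absolute": the phrase "the image of $x$ under any morphism $R \to S$ is fixed" should be read as saying that for every ring $S$, all morphisms $R \to S$ agree on $x$ (equivalently, $x$ lies in the equalizer of every pair of morphisms out of $R$). Once this reading is pinned down, both directions are essentially immediate, since a morphism is determined by its values on elements. I expect no real obstacle here; the only thing to be careful about is the degenerate case where $\operatorname{Hom}(R,S)$ is empty, which makes the relevant universally quantified statement vacuously true in both directions and so causes no trouble.
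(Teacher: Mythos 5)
Your proposal is correct and follows essentially the same argument as the paper: both directions reduce to the observation that a ring morphism is determined by its values on elements, with your version merely spelling out the quantifier bookkeeping more explicitly. No further comment is needed.
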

    \begin{proof}
        $\implies$
        If $R$ is solid then for any other ring $S$ there exists at most one morphism $R \to S$ and thus the image of any element $x \in R$ under any morphism $R \to S$ is fixed.

        $\impliedby$
        If all the elements of $R$ are absolute then for any other ring $S$ the image of any element $x \in R$ is uniquely defined and thus there exists at most one morphism $R \to S$.
    \end{proof}

    We start by giving an equivalent characterization of absolute elements by making use of the coproduct. Recall that in the category of rings, a coproduct exists and that it coincides with the tensor product over the initial ring $\Z$.

    \begin{lemma}
        Let $R$ be a ring.
        An element $x \in R$ is absolute if and only if $x \otimes 1 = 1 \otimes x$ in $R \otimes_\Z R$.
    \end{lemma}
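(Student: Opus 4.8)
The plan is to exploit the fact that $R \otimes_\Z R$ is the coproduct of $R$ with itself in the category of rings, equipped with the two canonical inclusions $i_1, i_2 \colon R \to R \otimes_\Z R$ given by $i_1(r) = r \otimes 1$ and $i_2(r) = 1 \otimes r$. The universal property says that for any ring $S$ and any pair of morphisms $f, g \colon R \to S$ there is a unique morphism $h \colon R \otimes_\Z R \to S$ with $h \circ i_1 = f$ and $h \circ i_2 = g$. Before anything else I would make explicit the reading of ``absolute'' that makes the statement true: $x$ is absolute precisely when, for every ring $S$, any two morphisms $R \to S$ take the same value on $x$ (the value being then uniquely determined, hence ``fixed''). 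This is the only point that needs a moment's care, and it is the main --- though very mild --- obstacle.

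For the forward direction, suppose $x$ is absolute. I would simply apply the definition to the particular target $S = R \otimes_\Z R$ together with the two canonical morphisms $i_1$ and $i_2$: absoluteness forces $i_1(x) = i_2(x)$, that is, $x \otimes 1 = 1 \otimes x$.

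For the reverse direction, suppose $x \otimes 1 = 1 \otimes x$. Given any ring $S$ and any two morphisms $f, g \colon R \to S$, I would invoke the universal property to obtain $h \colon R \otimes_\Z R \to S$ with $h \circ i_1 = f$ and $h \circ i_2 = g$, and then compute $f(x) = h(i_1(x)) = h(x \otimes 1) = h(1 \otimes x) = h(i_2(x)) = g(x)$. Since $S$, $f$ and $g$ were arbitrary, $x$ is absolute. No genuine computation is required beyond unwinding the universal property, so I do not anticipate any real difficulty; the whole content of the lemma is the translation between ``same image under all maps out of $R$'' and ``equalized by the two coproduct inclusions''.
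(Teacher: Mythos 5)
Your proposal is correct and follows essentially the same route as the paper: the forward direction applies absoluteness to the two coproduct inclusions $R \to R \otimes_\Z R$, and the reverse direction factors an arbitrary pair $f,g \colon R \to S$ through the coproduct via its universal property. The only difference is presentational — you spell out the computation $f(x) = h(x \otimes 1) = h(1 \otimes x) = g(x)$ that the paper leaves implicit in its diagram.
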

    \begin{proof}
        $\implies$ If an element $x \in R$ is absolute then considering the two projection morphism $R \to R \otimes_\Z R$ to the left and right coordinate, respectively, we obtain $x \otimes 1 = 1 \otimes x$ in $R \otimes_\Z R$.
        
        $\impliedby$ For any ring $S$ and any two morphism $f,g: R \to S$ we have a commutative diagram through the coproduct
    \begin{center}
        \begin{tikzcd}
                                                                 & R \arrow[d, "1 \otimes -"'] \arrow[rdd, "g", bend left] &   \\
        R \arrow[r, "- \otimes 1"] \arrow[rrd, "f"', bend right] & R \otimes_\Z R \arrow[rd, dashed]                       &   \\
                                                                 &                                                         & S
        \end{tikzcd}
    \end{center}
    and thus for $x \in R$ we have that from $x \otimes 1 = 1 \otimes x$ in $R \otimes_\Z R$ follows that $x$ is absolute.
    \end{proof}

    This characterization yields a description of the core of a ring $R$ more suited for computational purposes,
    $$
    cR = \{x \in R : x \otimes 1 = 1 \otimes x \textup{ in } R \otimes_\Z R\},
    $$
    nevertheless, recall that equality in the tensor product is, in general, not computable, even when the modules involved are computable, see \cite{Bac94}.\\

    Recall the categorical theoretic definition of an epimorphism as a morphism $f : X \to Y$ such that for all objects $Z$ and all morphisms $g_1, g_2: Y \to Z$,
    $$
    g_1f = g_2f \implies g_1 = g_2.
    $$
    Although in the category of sets this concept corresponds exactly to surjective functions, in the category of rings the inclusion $\Z \to \Q$ is a ring epimorphism which is not surjective.

    \emph{Solidity} can also be defined in other categories, an object is solid in a category with an initial object if the unique morphism from the initial object to our object is an epimorphism, in our case we care when the unique morphism $\Z \to R$ is an epimorphism.\\
    
    The following two lemmas are categorical in nature and apply more generally beyond the category of rings.
    These lemmas establish fundamental properties of colimits when the underlying objects are solid.

    \begin{lemma}
    \label{lmm:co_solid_1}
        Any colimit of solid rings is solid.
    \end{lemma}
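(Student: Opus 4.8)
The plan is to argue directly from the defining universal property of a colimit, using nothing about the category of rings beyond the fact that it is cocomplete. Write the given colimit as $R = \operatorname{colim}_{i \in I} R_i$, where $D \colon I \to \mathrm{Ring}$ is the diagram, each $R_i := D(i)$ is solid, and $\iota_i \colon R_i \to R$ denotes the structure morphism of the colimit cocone, so that $\iota_j \circ D(\alpha) = \iota_i$ for every arrow $\alpha \colon i \to j$ of $I$.

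To show that $R$ is solid I would fix an arbitrary ring $S$ together with two morphisms $f, g \colon R \to S$ and prove $f = g$. First, precompose with each structure morphism: for every $i \in I$ the maps $f \circ \iota_i$ and $g \circ \iota_i$ are two morphisms $R_i \to S$ out of the solid ring $R_i$, hence they coincide, $f \circ \iota_i = g \circ \iota_i =: h_i$. Next, observe that the family $\{h_i\}_{i \in I}$ is a cocone on $D$ with vertex $S$ (compatibility with the arrows $D(\alpha)$ is inherited from that of the $\iota_i$). By the universal property of $R = \operatorname{colim}_i R_i$ there is a \emph{unique} morphism $R \to S$ whose composite with each $\iota_i$ equals $h_i$; since both $f$ and $g$ have this property, $f = g$. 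As $S$ was arbitrary, $R$ is solid.

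I do not expect a genuine obstacle here; the lemma is purely formal. The only point that deserves to be stated explicitly is that the legs of a colimit cocone are jointly epimorphic — equivalently, that a morphism out of a colimit is determined by its restrictions along the structure maps — which is exactly the uniqueness half of the universal property. Once this is isolated, the proof is a two-line diagram chase. (If one prefers the reformulation of solidity via the condition that $\Z \to R$ be an epimorphism, the same joint-epimorphism fact concludes the argument, since $\Z \to R$ factors through each epimorphism $\Z \to R_i$.)
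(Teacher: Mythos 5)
Your argument is correct and is essentially the paper's own proof, made more explicit: both rely on the fact that a morphism out of a colimit is determined by its composites with the structure maps, combined with the solidity of each object in the diagram. The extra care you take in isolating the joint-epimorphism property of the colimit legs is welcome but does not change the route.
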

    \begin{proof}
        Given a colimit diagram of solid rings, any morphism from the colimit to another ring $S$ is uniquely determined by the universal property of colimits. Since each object in the diagram admits at most one morphism to $S$, their uniqueness is preserved in the colimit, ensuring that it remains solid.
    \end{proof}

    The next proposition is central in our work, it states that in a diagram of solid rings we can ignore or rearrange arrows between objects without affecting the colimit.
    This follows from the fact that, due to solidity, morphisms between the objects of our diagram do not add new structural information:
    The colimit of a diagram captures the data of morphisms from each object in the diagram and,
    since each solid ring has at most one morphism to any other ring, the colimit structure is determined purely by the objects themselves rather than by the arrows between them.
    Consequently, we can discard these arrows and work directly with the coproduct of the underlying objects.
    Moreover, if a solid ring appears more than once in a coproduct diagram, the only possible morphism between two of such copies is the identity.
    Hence, we only need to consider one copy in the coproduct.

    \begin{proposition}
    \label{prp:co_solid_2}
        The colimit of a diagram of solid rings is isomorphic to the coproduct of the underlying rings.
        Furthermore, the coproduct of two copies of a solid ring is the solid ring itself.
    \end{proposition}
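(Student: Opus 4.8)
The plan is to establish the two assertions separately, both by reducing everything to the universal property of the coproduct together with the defining property of solidity (uniqueness of morphisms out of a solid ring). First I would fix a small diagram $D \colon I \to \textup{Rings}$ whose objects $R_i$ are all solid, and let $C = \varinjlim_i R_i$ and $P = \coprod_i R_i$ (where the coproduct, as recalled in the excerpt, is the $\Z$-tensor product of the $R_i$). There is a canonical comparison map $\varphi \colon P \to C$, since the structure maps $R_i \to C$ of the colimit cocone form in particular a cocone on the \emph{discrete} diagram underlying $D$, hence factor through $P$. The goal is to produce an inverse.

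To build a map $C \to P$ I would check that the coprojections $\iota_i \colon R_i \to P$ form a cocone \emph{over the diagram $D$}, i.e. that for every arrow $\alpha \colon i \to j$ in $I$ one has $\iota_j \circ D(\alpha) = \iota_i$ as morphisms $R_i \to P$. This is exactly where solidity enters: both sides are ring morphisms out of the solid ring $R_i$, and by Lemma~\ref{lmm:co_solid_1} (or directly, since $P$ is a colimit of solid rings hence solid) there is \emph{at most one} such morphism, so they agree. Thus the $\iota_i$ assemble into a cocone on $D$, inducing $\psi \colon C \to P$ by the universal property of the colimit. Finally $\varphi$ and $\psi$ are mutually inverse: $\varphi\psi$ and $\psi\varphi$ are endomorphisms of $C$ and of $P$ respectively, each compatible with the relevant cocone, and since both $C$ and $P$ are themselves solid rings, the identity is the only endomorphism of each, so $\varphi\psi = \textup{id}_C$ and $\psi\varphi = \textup{id}_P$. (Alternatively one chases the cocone uniqueness clauses directly, which avoids invoking solidity of $C$ and $P$ but is more bookkeeping.)

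For the second assertion, let $R$ be solid and consider $R \otimes_\Z R$, the coproduct of two copies. By the previous Lemma, since $R$ is solid every element $x \in R$ is absolute, hence $x \otimes 1 = 1 \otimes x$ in $R \otimes_\Z R$; therefore the two coprojections $R \rightrightarrows R \otimes_\Z R$ coincide, call this common map $j$. I claim $j$ is an isomorphism. Its inverse is the fold map $\nabla \colon R \otimes_\Z R \to R$ induced by the identity cocone $(\textup{id}_R, \textup{id}_R)$; one has $\nabla \circ j = \textup{id}_R$ immediately, and $j \circ \nabla = \textup{id}_{R \otimes_\Z R}$ because $R \otimes_\Z R$ is solid (being a coproduct, hence colimit, of solid rings) and so has only one endomorphism. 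Concretely $j(x) = x \otimes 1 = 1 \otimes x$ and $j(\nabla(x \otimes y)) = j(xy) = xy \otimes 1 = (x \otimes 1)(y \otimes 1) = (x\otimes 1)(1 \otimes y) = x \otimes y$, confirming this directly.

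The main obstacle is not any single hard computation but rather keeping the cocone-compatibility arguments honest: one must be careful that "at most one morphism out of a solid ring" is being applied to morphisms with the \emph{correct} target, and that the comparison maps $\varphi, \psi$ are genuinely induced by cocones rather than merely defined on generators. Once it is observed that $P$ (and $C$) are solid by Lemma~\ref{lmm:co_solid_1}, the uniqueness of endomorphisms makes the mutual-inverse check essentially automatic, so the real content is the single line "$\iota_j \circ D(\alpha) = \iota_i$ by solidity of $R_i$," which is what formalizes the informal statement that the arrows of the diagram carry no information.
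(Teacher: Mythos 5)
Your argument for the first assertion is essentially identical to the paper's: you build the comparison maps $P \to C$ and $C \to P$, using solidity of each $R_i$ to see that the coprojections form a cocone over the full diagram, and then verify they are mutually inverse (the paper does this via the uniqueness clause of the colimit, which you list as your alternative, but your shortcut via solidity of $C$ and $P$ is equally valid). Your treatment of the second assertion via the coincidence of the two coprojections and the fold map is correct and in fact more explicit than the paper, which only sketches this point in the discussion preceding the proposition.
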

    \begin{proof}
        Let $D$ be a diagram of solid rings and consider $D'$ to be the same diagram without the arrows.
        Write $c$ for the object in the colimit of $D$ and $c'$ for the object in the colimit of $D'$, i.e. $c'$ is the coproduct of the underlying objects of $D$, and for any object $d$ in $D$ (or $D'$ indistinctively), let us write $c_d$ and $c'_d$ for the morphisms from $d$ to $c$ and $c'$ given by the colimit constructions, respectively.
        We always have an arrow $a: c' \to c$ such that for any object $d$ of $D'$ we have $ac'_d = c_d$ and solidity gives us an arrow $b: c \to c'$, since for any morphism $f: d \to d'$ in $D$ we have that $c'_{d'} \circ f = c'_d$ because $d$ is solid, such that for any object $d$ of $D$ we have $bc_d = c'_d$.
        Since for any object $d$ we have that $bac'_d = bc_d = c'_d$ then $ba = 1_{c'}$ and in the same way since $abc_d = ac'_d = c_d$ we have that $ab = 1_c$.
        And we are done.
    \end{proof}
    %

    \subsection{Coproducts of solid rings of types (1), (2), and (3)}
    \label{sbs:cop}

    We now recall the characterization of solid rings of types (1) and (2), and (3) from the introduction and compute their coproducts explicitly.
    In each case, we will see that a finite coproduct of rings of a given type remains within the same type.
    The proofs of the following lemmas are straightforward calculations of finite coproducts.

    \begin{lemma}
    \label{lmm:co_1}
        The coproduct of any set $\{\Z/n_i\Z\}_{i \in I}$ of solid rings of type (1) can be written as $\Z/g\Z$, where $g = \gcd\limits_{i \in I} n_i$.
    \end{lemma}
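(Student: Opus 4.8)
The plan is to compute the coproduct $\bigotimes_{i \in I, \Z} \Z/n_i\Z$ directly and identify it with $\Z/g\Z$ where $g = \gcd_{i \in I} n_i$. First I would reduce to the finite case: since any coproduct is a filtered colimit of its finite sub-coproducts, and since the gcd of the whole family $\{n_i\}$ equals the gcd of some finite subfamily (the descending chain of gcds of finite subsets stabilizes, as it is a chain of divisibilities among positive integers), it suffices to treat a finite index set. Alternatively, since all the rings in sight are solid, Proposition \ref{prp:co_solid_2} lets me work with the coproduct as a plain tensor product without worrying about diagram arrows, and Lemma \ref{lmm:co_solid_1} guarantees the result is again solid.

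For a finite index set, the main computation is the standard ring isomorphism $\Z/m\Z \otimes_\Z \Z/n\Z \cong \Z/\gcd(m,n)\Z$. I would prove this by noting $\Z/n\Z = \Z/n\Z$ and using right-exactness of $- \otimes_\Z \Z/n\Z$ applied to the presentation $0 \to \Z \xrightarrow{m} \Z \to \Z/m\Z \to 0$, which yields $\Z/m\Z \otimes_\Z \Z/n\Z \cong (\Z/n\Z)/m(\Z/n\Z) \cong \Z/(m,n)\Z$; the ring structure matches because the isomorphism sends $1 \otimes 1$ to $1$. Iterating over the finite index set and using associativity of the tensor product together with $\gcd(\gcd(n_1,n_2),n_3) = \gcd(n_1,n_2,n_3)$ gives the finite case.

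Then I would pass back to the general case: the coproduct over an arbitrary set $I$ is the filtered colimit over finite subsets $F \subseteq I$ of the rings $\Z/g_F\Z$ where $g_F = \gcd_{i \in F} n_i$, with transition maps the canonical surjections $\Z/g_F\Z \to \Z/g_{F'}\Z$ for $F \subseteq F'$ (these are well-defined ring maps since $g_{F'} \mid g_F$). Since the $g_F$ form a chain under divisibility bounded below, they stabilize at $g = \gcd_{i \in I} n_i$, so the colimit is $\Z/g\Z$. The case $g = 0$ (all $n_i = 0$, or more precisely when no finite gcd is nonzero — but each $n_i$ is a positive integer so this does not occur here; if one allows $n_i$ to be arbitrary one gets $\Z$) and $g = 1$ (coproduct is the zero ring $\Z/\Z$) are degenerate but consistent with the formula.

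I do not expect a serious obstacle here; the only mild subtlety is the reduction from an infinite to a finite index set, which hinges on the observation that a descending chain of positive integers under divisibility stabilizes — so the "infinite gcd" is actually attained on a finite subset. The computation $\Z/m\Z \otimes_\Z \Z/n\Z \cong \Z/\gcd(m,n)\Z$ is entirely routine, so in the write-up I would state it and move on rather than belabor the right-exactness argument.
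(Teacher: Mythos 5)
Your proposal is correct and matches what the paper intends: the paper omits the proof entirely, remarking only that these lemmas are ``straightforward calculations of finite coproducts,'' and your computation $\Z/m\Z \otimes_\Z \Z/n\Z \cong \Z/\gcd(m,n)\Z$ iterated over finite subsets, followed by the filtered-colimit passage to an arbitrary index set (where the gcd is attained on a finite subfamily), is exactly that calculation carried out in full. No gaps.
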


    \begin{lemma}
    \label{lmm:co_2}
        The coproduct of any set $\{\Z[J_i^{-1}]\}_{i \in I}$ of solid rings of type (2) can be written as $\Z[J^{-1}]$, where $J = \bigcup\limits_{i \in I} J_i$.
        In particular, $\Z[J^{-1}]$ can be thought as the coproduct of the rings $\{\Z[1/p]\}_{p \in J}$.
    \end{lemma}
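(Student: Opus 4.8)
The plan is to reduce to a finite computation and then pass to a filtered colimit. Recall that in the category of rings the coproduct of an arbitrary family is the filtered colimit, over the finite subsets $F \subseteq I$ ordered by inclusion, of the finite coproducts $\bigotimes_{i \in F} \Z[J_i^{-1}]$. So the first step is to compute a finite coproduct $\Z[J_1^{-1}] \otimes_\Z \cdots \otimes_\Z \Z[J_k^{-1}]$: writing $\Z[J_j^{-1}] = S_j^{-1}\Z$ where $S_j$ is the multiplicative subset generated by the primes of $J_j$, and using that localizations are flat with $S^{-1}\Z \otimes_\Z M \cong S^{-1}M$, an easy induction identifies this tensor product with the localization of $\Z$ at the multiplicative set generated by the primes of $J_1 \cup \cdots \cup J_k$, that is, with $\Z[(J_1 \cup \cdots \cup J_k)^{-1}]$.

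The second step is to take the colimit. For finite sets $F \subseteq F' \subseteq I$ the transition map $\Z[(\bigcup_{i\in F} J_i)^{-1}] \to \Z[(\bigcup_{i\in F'} J_i)^{-1}]$ is a morphism between solid rings of type (2), hence it is the unique such morphism, namely the evident inclusion of subrings of $\Q$. Thus we have a directed system of subrings of $\Q$ under inclusion, and its colimit is their union $\bigcup_{F} \Z[(\bigcup_{i\in F} J_i)^{-1}]$. Since every element of $\Z[J^{-1}]$ is, in lowest terms, a fraction whose denominator is divisible by only finitely many primes of $J$, this union equals $\Z[J^{-1}]$ for $J = \bigcup_{i\in I} J_i$, as claimed. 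The ``in particular'' statement is then the special case $I = J$ with $J_p = \{p\}$ for each $p \in J$, since $\bigcup_{p\in J}\{p\} = J$.

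Alternatively, and perhaps more transparently, one can verify the universal property of the coproduct directly: the inclusions $\Z[J_i^{-1}] \hookrightarrow \Z[J^{-1}]$ form a cocone, and given any ring $S$ with a family of maps $f_i \colon \Z[J_i^{-1}] \to S$, each $f_i$ sends every prime $p \in J_i$ to a unit of $S$ (with inverse $f_i(1/p)$), so every prime of $J$ becomes invertible in $S$; hence $\Z \to S$ factors uniquely through $\Z[J^{-1}]$, and the factorization restricts to each $f_i$ because $\Z[J_i^{-1}]$ is solid. I do not anticipate a serious obstacle: the finite coproduct computation is routine, and the only point requiring a little care is the passage to the infinite case, i.e.\ identifying the filtered colimit of localizations as a nested union — which the universal-property argument sidesteps entirely.
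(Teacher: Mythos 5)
Your proof is correct and follows the route the paper indicates (the paper omits the argument, noting only that these lemmas are ``straightforward calculations of finite coproducts''): you compute the finite tensor products as localizations and pass to the filtered colimit, which is exactly the intended computation. The universal-property argument you add as an alternative is also valid and arguably cleaner, but both are consistent with the paper's approach.
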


    For the last lemma we use the standard notation $v_p(n)$ for a given positive integer $n$ at a prime number $p$ to mean the exponent of $p$ in the prime factorization of $n$.

    \begin{lemma}
    \label{lmm:co_3}
    Any solid ring of type (3), $\Z[J^{-1}] \times \Z/n\Z$ where $J$ is any set of primes that contains all prime factors of $n$, can be written as the coproduct of $\Z[(J\setminus K)^{-1}]$ and $\{\Z[1/p] \times \Z/p^{e_p}\Z\}_{p \in K}$, where $K$ is the finite set of primes that divides $n$ and $e_p := v_p(n)$.
    \end{lemma}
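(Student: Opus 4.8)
The plan is to compute the coproduct directly as a finite iterated tensor product over $\Z$ and then simplify, using only that the tensor product distributes over finite products of rings together with a handful of elementary identities. Since $K$ is finite, the asserted coproduct is
$$
\Z[(J\setminus K)^{-1}] \otimes_\Z \bigotimes_{p \in K} \big(\Z[1/p] \times \Z/p^{e_p}\Z\big),
$$
and everything below is a manipulation of this expression.

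First I would record the elementary computations. For distinct primes $p,q$ one has $\Z[1/p] \otimes_\Z \Z[1/q] \cong \Z[1/pq]$ and $\Z/p^{e_p}\Z \otimes_\Z \Z/q^{e_q}\Z \cong \Z/\gcd(p^{e_p},q^{e_q})\Z = 0$; more generally $\Z[1/p] \otimes_\Z \Z/m\Z \cong \Z/m\Z$ whenever $p \nmid m$, because $p$ already acts invertibly on $\Z/m\Z$. Two global consequences of the last point will be used: $\Z[(J\setminus K)^{-1}] \otimes_\Z \Z/n\Z \cong \Z/n\Z$, since every prime of $J \setminus K$ is coprime to $n$ (as $K$ is exactly the set of prime divisors of $n$), and $\Z[(J\setminus K)^{-1}] \otimes_\Z \Z[K^{-1}] \cong \Z[J^{-1}]$, since $J = (J\setminus K) \sqcup K$.

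Next I would expand $\bigotimes_{p \in K}(\Z[1/p] \times \Z/p^{e_p}\Z)$ by distributing the tensor product over the finite products: a typical summand corresponds to choosing, for each $p \in K$, either the factor $\Z[1/p]$ or the factor $\Z/p^{e_p}\Z$. If the torsion factor is chosen for two distinct primes $p \neq q$, the summand contains the subtensor $\Z/p^{e_p}\Z \otimes_\Z \Z/q^{e_q}\Z = 0$ and therefore vanishes. The all-localization summand contributes $\bigotimes_{p \in K}\Z[1/p] \cong \Z[K^{-1}]$ (Lemma \ref{lmm:co_2}), while the summand choosing the torsion factor at exactly one prime $p$ contributes $\Z/p^{e_p}\Z$ (the remaining localizations being absorbed since the other primes are invertible modulo $p^{e_p}$). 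Hence $\bigotimes_{p \in K}(\Z[1/p] \times \Z/p^{e_p}\Z) \cong \Z[K^{-1}] \times \prod_{p \in K}\Z/p^{e_p}\Z \cong \Z[K^{-1}] \times \Z/n\Z$, the last isomorphism by the Chinese remainder theorem using $\prod_{p \in K}p^{e_p} = n$. Tensoring with $\Z[(J\setminus K)^{-1}]$ and distributing once more gives
$$
\Z[(J\setminus K)^{-1}] \otimes_\Z \big(\Z[K^{-1}] \times \Z/n\Z\big) \cong \Z[J^{-1}] \times \Z/n\Z,
$$
which is the claim.

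The computation is entirely routine; the only point requiring a little care is the combinatorial bookkeeping of which summands survive when the tensor product is distributed over the finite products. Once one observes that every summand with at least two torsion factors dies, nothing more needs to be checked, but if a fully formal argument is preferred one can instead induct on $|K|$ using the two-factor identity $(\Z[1/p]\times\Z/p^{e_p}\Z)\otimes_\Z(\Z[L^{-1}]\times\Z/n_L\Z) \cong \Z[(L\cup\{p\})^{-1}]\times\Z/(p^{e_p}n_L)\Z$, valid for any finite set of primes $L$ with $p \notin L$ and $n_L = \prod_{q \in L}q^{e_q}$.
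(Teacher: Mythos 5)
Your proof is correct and follows essentially the same route as the paper: distribute the tensor product over the finite products, kill every summand containing two coprime torsion factors, absorb the localizations into the surviving torsion pieces, and finish with the Chinese remainder theorem. The paper simply packages this as the two-factor identity $(\Z[A^{-1}]\times\Z/m\Z)\otimes(\Z[B^{-1}]\times\Z/n\Z)\cong\Z[(A\cup B)^{-1}]\times\Z/mn\Z$ applied iteratively, which is exactly the inductive variant you mention at the end.
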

    \begin{proof}
        Let $A$ and $B$ be disjoint sets of primes and let $m$ and $n$ be positive integers such that all prime factors of $m$ are in $A$ and all prime factors of $n$ are in $B$.
        Then
        $$
        (\Z[A^{-1}] \times \Z/m\Z) \otimes (\Z[B^{-1}] \times \Z/n\Z)
        $$
        is equal to
        $$
        (\Z[A^{-1}] \otimes \Z[B^{-1}])
        \times (\Z[A^{-1}] \otimes \Z/n\Z)
        \times (\Z/m\Z \otimes \Z[B^{-1}])
        \times (\Z/m\Z \otimes \Z/n\Z).
        $$
        Since every $p \in A$ is already invertible in $\Z/n\Z$, every $q \in B$ is also invertible in $\Z/m\Z$, and $\gcd(m,n) = 1$, this is equal to
        $$
        \Z[(A \cup B)^{-1}]
        \times \Z/n\Z
        \times \Z/m\Z
        \times 0,
        $$
        and applying $\gcd(m,n) = 1$ once again we get
        $$
        \Z[(A \cup B)^{-1}]
        \times \Z/mn\Z.
        $$
        Applying this fact repeatedly to the rings in our statement we prove the lemma.
    \end{proof}

    These lemmas, together with the fact that colimits commute with colimits, allow us to further simplify any colimit diagram of solid rings of types (1), (2), and (3).
    Specifically, such a colimit can be written as the colimit of a diagram containing at most one solid ring of type (1), $\Z/n\Z$, at most one solid ring of type (2), $\Z[J\setminus K]$ for any two nested sets of primes $K \subseteq J$, and all remaining rings are of the form $\{\Z[1/p] \times \Z/p^{e_p}\Z\}_{p \in K}$ where the exponents are positive integers.

    At this stage, one may observe that solid rings of types (2) and (3) can be unified into a broader framework.
    Both types can be viewed as coproducts of rings of the form
    $$
    \{\Z[1/p] \times \Z/p^{e_p}\Z\}_{p \in J},
    $$
    where only finitely many exponents are positive integers and the rest are zero.
    When all exponents $e_p$ are zero, we recover rings of type (2), while allowing a finite non-zero amount of them to be positive corresponds to rings of type (3).

    This insight suggests a natural way to decompose solid rings into more fundamental building blocks.
    In the following subsection, we dive deeper in the basic solid rings, already defined in the introduction, which serve as the essential components from which all solid rings can be constructed.
    These will provide a conceptual foundation for understanding the structure of solid rings in full generality.\\

    We conclude this section by stating a key structural result, already present in the literature, which ensures that every solid ring can be built from these fundamental cases.
    For more detail in its proof we refer to the original work.
    
    \begin{proposition}[{\cite[Proposition 3.1]{BK71}}]
    \label{prp:co_solid_3}
    \label{prp:co_solid_4}
        Every solid ring can be written as a colimit of solid rings of types (1), (2), and (3) contained in it.
    \end{proposition}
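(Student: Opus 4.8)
The plan is to exhibit $R$ as the filtered colimit of the poset $\mathcal{T}$ of all solid subrings of $R$ of types (1), (2), and (3), ordered by inclusion; since the transition maps of this diagram are inclusions of subrings of $R$, its colimit is simply the union $\bigcup_{B\in\mathcal{T}}B$, so everything reduces to showing that $\mathcal{T}$ is directed and that every element of $R$ lies in some member of $\mathcal{T}$. Directedness needs no structure theory: given $B,B'\in\mathcal{T}$, the subring $\langle B,B'\rangle$ they generate inside $R$ is the image of the coproduct $B\otimes_{\Z}B'\to R$ furnished by the two inclusions, hence a quotient of $B\otimes_{\Z}B'$; by Lemmas \ref{lmm:co_1}, \ref{lmm:co_2}, \ref{lmm:co_3} together with the analogous routine computations of coproducts of rings of two different types (such as $\Z/n\Z\otimes_{\Z}\Z[J^{-1}]$), the coproduct $B\otimes_{\Z}B'$ is again of type (1), (2), or (3), and a short inspection of the ideal lattices of $\Z/n\Z$, $\Z[J^{-1}]$, and $\Z[J^{-1}]\times\Z/n\Z$ shows that any quotient of a ring of type (1), (2), or (3) is again of one of these types. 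Hence $\langle B,B'\rangle\in\mathcal{T}$ is an upper bound.

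For the covering property, fix $x\in R$. Since $R$ is solid, $x$ is absolute, i.e.\ $x\otimes 1=1\otimes x$ in $R\otimes_{\Z}R$; writing $R$ as the filtered union of its finitely generated subrings and using that $-\otimes_{\Z}-$ commutes with filtered colimits, this equality already holds in $A\otimes_{\Z}A$ for some finitely generated subring $A\ni x$, so $x$ is absolute in $A$. Such an $A$ is a finitely generated $\Z$-algebra, hence Noetherian, so $\Spec A$ has finitely many connected components and $A$ has only finitely many idempotents. Passing to the core, $cA$ contains $x$, is solid (the core of any ring is solid --- a classical fact), and has finitely many idempotents (they lie among those of $A$). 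Thus the covering property --- and with it the whole proposition --- follows from the structural fact that \emph{a solid ring with only finitely many idempotents is of type (1), (2), or (3)}: granting it, $cA\in\mathcal{T}$ contains $x$, so $R=\bigcup_{B\in\mathcal{T}}B=\operatorname{colim}_{B\in\mathcal{T}}B$ is a colimit of solid rings of types (1), (2), (3) contained in it.

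The main obstacle is that structural fact, which is the heart of \cite[Proposition 3.1]{BK71}. I would prove it by noting that a ring with finitely many idempotents is a finite product of connected solid rings, and then classifying the latter. A connected solid ring of positive characteristic must have characteristic a prime power $p^k$; then $R/pR$ is an epimorphic $\mathbb{F}_p$-algebra, hence equal to $\mathbb{F}_p$ (an epimorphism out of a field is an isomorphism or the zero map, since for $a\notin k\cdot 1$ the elements $a\otimes 1$ and $1\otimes a$ of $A\otimes_k A$ are distinct), and a Nakayama-type iteration using $p^k=0$ forces $R=\Z/p^k\Z$. A connected solid ring of characteristic zero is $\Z$-torsion-free, hence $\Z$-flat, and a flat ring epimorphism out of $\Z$ is a localization, so it is $\Z[J^{-1}]$ for some set of primes $J$. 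Finally, demanding that a finite product of such connected pieces be solid forces it, via Lemmas \ref{lmm:co_1}--\ref{lmm:co_3}, to be of one of the types (1), (2), (3). The genuinely delicate points --- for which I would defer to the original argument --- are the torsion/idempotent bookkeeping in the characteristic-zero case and the treatment of non-reduced rings in the positive-characteristic case.
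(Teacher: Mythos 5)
Your proposal is correct in substance and, at its core, follows the same strategy as the paper: reduce to finitely generated subrings and then invoke the Bousfield--Kan structure theory for the resulting cores. Your covering argument --- that $x\otimes 1=1\otimes x$ in $R\otimes_\Z R$ already holds in $A\otimes_\Z A$ for some finitely generated $A\ni x$ because $-\otimes_\Z-$ commutes with filtered colimits --- is precisely the content hiding behind the paper's one-line assertion that ``the core of our ring will be the corresponding colimit of the cores of those rings,'' so you have made the glossed step rigorous rather than changed it. Where you genuinely diverge is in the organization of the diagram and in the finiteness hypothesis fed to the structure theorem. The paper indexes its colimit by the (automatically directed) poset of finitely generated subrings and cites \cite{BK71} for the fact that their cores are of types (1)--(3); you instead index by the poset of all type (1)--(3) solid subrings, which forces you to prove directedness via coproduct and quotient computations --- correct, but extra machinery the paper avoids (and which Proposition \ref{prp:co_solid_2} would also let you bypass, since arrows in a diagram of solid rings are irrelevant to the colimit). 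Your replacement of ``finitely generated'' by ``finitely many idempotents'' as the hypothesis for the classification is a nice observation (type (4) rings always carry infinitely many idempotents), and your sketch of the connected case is plausible: the positive-characteristic argument via $R/pR=\mathbb{F}_p$ and the iteration $R=\Z\cdot 1+p^kR=\Z\cdot 1$ is complete as written.

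The one point to flag is that the structural fact you isolate --- a connected solid ring of characteristic zero is torsion-free, hence a flat epimorphism out of $\Z$, hence some $\Z[J^{-1}]$ --- is exactly the delicate part of \cite[Proposition 3.1]{BK71}, and you explicitly defer it. That is acceptable here, since the proposition is attributed to \cite{BK71} and the paper's own proof defers the identical point; but be aware that torsion-freeness must genuinely come from solidity (connected characteristic-zero rings such as $\Z[x]/(2x,x^2)$ do have torsion), and that ``a flat ring epimorphism out of $\Z$ is a localization'' is itself a nontrivial classification (Lazard--Silver), not a formality. So your proof is a faithful, more detailed rendering of the paper's argument rather than an independent one.
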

    \begin{proof}
    	Every ring can be written as the colimit of all its finitely generated subrings.
    	Thus, the core of our ring will be the corresponding colimit of the cores of those rings, which, in the original paper, are proved to be of type (1), (2) and (3).
    	Since these cores are subrings of the finitely generated subrings that we considered, they are subrings of our ring, and thus why the "contained in it".
    \end{proof}
   
    \section{Basic solid rings}

    In the previous section, we presented a unified perspective.
    At its foundation lie the \emph{basic} solid rings, which serve as the fundamental building blocks of all solid rings.
    These are divided into two types:

    \begin{itemize}
        \item \emph{Finite basic solid rings}, given by the rings
        $$
        \Z/n\Z
        $$
        for a positive integer $n$, which correspond to solid rings of type (1).

        \item \emph{Infinite basic solid rings}, given by the rings
        $$
        \Z[1/p] \times \Z/p^a\Z
        $$
        for a prime $p$ and a nonnegative integer $a$.
    \end{itemize}

    As proven in the previous section, coproducts of infinite basic solid rings where at most a finite amount of the exponents are non-zero produce all solid rings of types (2) and (3).
    This, together with the fact that every solid ring can be described as a colimit of rings of types (1), (2), and (3), means that every solid ring can ultimately be expressed in terms of these basic building blocks as a colimit.
    Furthermore, these basic solid rings in Lemma \ref{lmm:co_2} and Lemma \ref{lmm:co_3} whose coproduct give solid rings of type (2) and (3), respectively, are also subrings of such coproducts.
    This yields the following refinement of Proposition \ref{prp:co_solid_4}:

    
    \begin{theorem}
    	\label{thm:submain}
    	The core of a ring $R$ is the coproduct of all its basic solid subrings.
    \end{theorem}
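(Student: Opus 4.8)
The plan is to build the coproduct of all basic solid subrings of $R$, equip it with a natural map to $cR$, and then use solidity to force that map to be an isomorphism; no serious new computation is needed, since everything computational is already packaged in the lemmas above.

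First I would fix notation. Let $\{B_i\}_{i\in I}$ be the set of all subrings of $R$ that happen to be basic solid rings. Any solid subring of $R$ is automatically contained in $cR$: if $f,g\colon R\to S$ are two ring maps, their restrictions to such a subring agree by solidity, so every element of the subring is absolute in $R$. In particular each $B_i\subseteq cR$, and the inclusions $B_i\hookrightarrow cR$ induce a canonical ring map $\chi\colon\coprod_{i\in I}B_i\to cR$, whose source is solid by Lemma \ref{lmm:co_solid_1}. The whole proof then reduces to showing that $\chi$ is an isomorphism.

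For surjectivity I would feed in the structure theory already assembled. By the proof of Proposition \ref{prp:co_solid_4} together with Proposition \ref{prp:co_solid_2}, $cR$ is the coproduct of the cores $cA$ of the finitely generated subrings $A\subseteq R$, and each such $cA$ is a solid subring of $R$ of type (1), (2) or (3). By Lemmas \ref{lmm:co_1}, \ref{lmm:co_2} and \ref{lmm:co_3}, each $cA$ is itself a coproduct of basic solid rings, and --- this is the point at which the remark following Proposition \ref{prp:co_solid_4} is essential --- those basic pieces sit inside $cA$ as subrings, hence are subrings of $R$, hence appear among the $B_i$. Since a coproduct of coproducts is again a coproduct, $cR$ is canonically the coproduct of some subfamily of the $B_i$, and the associated canonical isomorphism from that sub-coproduct onto $cR$ factors through $\coprod_{i\in I}B_i$ (sending each summand via the corresponding coproduct injection). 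This exhibits $\chi$ as a split epimorphism; in particular $\chi$ is surjective, and one obtains a section $s\colon cR\to\coprod_{i\in I}B_i$ with $\chi\circ s=\mathrm{id}_{cR}$.

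For injectivity the clean observation is that a solid ring has exactly one endomorphism, the identity --- immediate from the definition, since the identity is always an endomorphism and solidity forbids a second one. Applying this to the solid ring $\coprod_{i\in I}B_i$, the endomorphism $s\circ\chi$ must be the identity; combined with $\chi\circ s=\mathrm{id}_{cR}$ this makes $\chi$ an isomorphism, and therefore $cR\cong\coprod_{i\in I}B_i$. I expect the main obstacle to lie entirely in the surjectivity step: one must check carefully that every basic solid ring produced by unwinding the type-(1), (2), (3) decompositions is genuinely realized as a subring of $R$ --- not merely receiving a ring map --- and that the various canonical identifications are compatible enough for the factorization through $\coprod_{i\in I}B_i$ to be legitimate, which is a diagram chase on coproduct summands using that $cR\hookrightarrow R$ is a monomorphism. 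Once that is in place, the uniqueness-of-endomorphisms remark closes the argument at once.
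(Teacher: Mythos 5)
Your proof is correct and follows essentially the same route as the paper: both decompose $cR$ via Proposition \ref{prp:co_solid_4}, Proposition \ref{prp:co_solid_2} and Lemmas \ref{lmm:co_1}--\ref{lmm:co_3} into a coproduct of basic solid subrings of $R$, and then identify this with the coproduct over \emph{all} basic solid subrings. The only difference is bookkeeping: where the paper invokes maximality of $cR$ in the lattice of solid subrings and ``deletes repeated copies'' via Proposition \ref{prp:co_solid_2}, you make the comparison map $\chi$ explicit and close the loop with the (correct and tidy) observation that a solid ring admits exactly one endomorphism.
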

    \begin{proof}
    	The lattice of solid subrings of a ring $R$ has a maximal element, its core $cR$, and thus the coproduct of all these rings is precisely $cR$.
    	Furthermore, since colimits commute with colimits, we can apply Proposition \ref{prp:co_solid_4} and substitute every such solid subring by solid subrings of type (1), (2) and (3) that are contained in it, and after another step doing a substitution of each such subring by its corresponding basic solid subrings and deleting repeated copies, Proposition \ref{prp:co_solid_2}, we obtain the assertion.
    \end{proof}

    %
    %


    \subsection{Maps from basic solid rings}
    
    In this subsection, we analyze how the existence of a morphism from a basic solid ring to a given ring $R$ constrains the structure of $R$.  
    By Theorem \ref{thm:submain} above, understanding the morphisms from these building blocks is equivalent to understanding morphisms from arbitrary solid rings.

    \begin{lemma}
    \label{lm:explicit_meaning_fin}
        There exists a morphism $\Z/m\Z \to R$ if and only if the characteristic of $R$ divides $m$.
    \end{lemma}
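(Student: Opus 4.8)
The plan is to reduce the statement to the single canonical map out of the initial ring together with the universal property of a quotient. First I would recall that $\Z$ is the initial object in the category of (commutative unital) rings, so there is a unique morphism $u\colon \Z \to R$, given on elements by $n \mapsto n\cdot 1_R$, and that by definition of the characteristic one has $\ker u = (\chara(R))$ as an ideal of $\Z$. I would also note that $\Z/m\Z$ is generated as a ring by the image of $1$, so any morphism $f\colon \Z/m\Z \to R$ is forced to send that generator to $1_R$; hence $f$, if it exists, is unique, and the composite $\Z \twoheadrightarrow \Z/m\Z \xrightarrow{f} R$ necessarily equals $u$.

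Next I would make the equivalence precise. Writing $\pi\colon \Z \to \Z/m\Z$ for the quotient map, the previous paragraph shows that a morphism $\Z/m\Z \to R$ exists if and only if $u$ factors through $\pi$. By the universal property of the quotient ring, such a factorization exists if and only if $\ker\pi = (m) \subseteq \ker u$, i.e. if and only if $m\cdot 1_R = 0$ in $R$.

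Finally I would translate the containment $(m)\subseteq(\chara(R))$ into the divisibility condition $\chara(R)\mid m$, using the standard convention that $0 \mid m$ means $m = 0$ so that the statement reads correctly when $R$ has characteristic zero; this settles both implications simultaneously. There is essentially no obstacle here: the content is purely the initiality of $\Z$ and the universal property of $\Z/m\Z$ as a quotient, and the only point deserving a word of care is the bookkeeping in the characteristic zero case (and, if one wishes to remark on it, the automatic uniqueness of the factorization, which is consistent with $\Z/m\Z$ being solid).
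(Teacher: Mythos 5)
Your proof is correct and is essentially the paper's own argument made explicit: the paper simply notes that a morphism $\Z/m\Z \to R$ exists if and only if $m = 0$ in $R$, which is exactly your factorization of the unique map $\Z \to R$ through the quotient $\Z/m\Z$ followed by the translation into the divisibility condition $\chara(R) \mid m$. Your extra care with the characteristic zero convention and the uniqueness remark are welcome but not a different route.
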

    \begin{proof}
        There is a morphism $\Z/m\Z \to R$ if and only if $m = 0$ in $R$, the statement follows.
    \end{proof}

    \begin{lemma}
    \label{lm:explicit_meaning}
        There exists a morphism $\varphi: \Z[1/p] \times \Z/p^a\Z \to R$ if and only if there exists an element $r_{p,a} \in R$ satisfying
        $$
        pr_{p,a}^2 = r_{p,a} \textup{ and } p^{a+1}r_{p,a} = p^a.
        $$
    \end{lemma}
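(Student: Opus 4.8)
The plan is to prove the equivalence by translating, in each direction, between a morphism $\varphi\colon\Z[1/p]\times\Z/p^a\Z\to R$ and the element $r_{p,a}$; the element doing all the work will be the image of $(1/p,0)$.

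For the forward direction, assume $\varphi$ exists and set $r_{p,a}:=\varphi(1/p,0)$. Since $\varphi$ is a ring homomorphism, $p\,r_{p,a}^2=\varphi\big(p\cdot(1/p^2,0)\big)=\varphi(1/p,0)=r_{p,a}$; and because $p^a=0$ in $\Z/p^a\Z$ we have the identity $(p^{a+1},0)=(p^a,0)=p^a\cdot 1$ inside $\Z[1/p]\times\Z/p^a\Z$, so applying $\varphi$ gives $p^{a+1}r_{p,a}=p^a$ in $R$.

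For the converse, suppose $r:=r_{p,a}\in R$ satisfies $pr^2=r$ and $p^{a+1}r=p^a$. The key step is to introduce the idempotent $e:=pr$; indeed $e^2=p^2r^2=p(pr^2)=pr=e$, so $R$ decomposes as $R\cong eR\times(1-e)R$, where $eR$ and $(1-e)R$ are rings with units $e$ and $1-e$ respectively. On the first summand $r\in eR$, because $er=pr^2=r$, and $p\cdot r=pr=e$ is the unit of $eR$, so $p$ is invertible there; by the universal property of $\Z[1/p]$ this gives a ring map $\Z[1/p]\to eR$ sending $1/p$ to $r$. On the second summand, $p^a(1-e)=p^a-p^ae=p^a-p^{a+1}r=0$, so $p^a=0$ in $(1-e)R$ and there is a ring map $\Z/p^a\Z\to(1-e)R$. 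Forming the product of these two maps and composing with the isomorphism $eR\times(1-e)R\cong R$ produces the desired $\varphi$, completing the proof.

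I do not anticipate a genuine obstacle; the points demanding a little care are checking that $e=pr$ really is idempotent and that $r$ belongs to the summand $eR$, so that $1/p\mapsto r$ defines an honest ring homomorphism out of $\Z[1/p]$ rather than merely an additive map. When $a=0$ the factor $\Z/p^a\Z$ is the zero ring, $e=pr=1$, and everything collapses to the familiar statement that $\Z[1/p]\to R$ exists iff $p$ is a unit of $R$; this case is subsumed by the argument above. One could also observe, since $\Z[1/p]\times\Z/p^a\Z$ is solid, that the $\varphi$ constructed is the unique morphism, although uniqueness is not asserted in the lemma.
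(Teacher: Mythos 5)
Your proof is correct, but it takes a genuinely different route from the paper's. The paper handles the converse by writing down the explicit formula $\varphi(m/p^t,n) = pr_{p,a}^{t+1}m + (1-pr_{p,a})n$, verifying well-definedness via $pr_{p,a}^2=r_{p,a}$, and leaving the check that this is actually a ring homomorphism to the reader. You instead observe that $e:=pr$ is idempotent, split $R\cong eR\times(1-e)R$, and invoke the universal properties of the localization $\Z[1/p]$ (since $pe$ is invertible in $eR$ with inverse $r$) and of the quotient $\Z/p^a\Z$ (since $p^a(1-e)=p^a-p^{a+1}r=0$). This is more conceptual and arguably more complete: the existence and homomorphism property of $\varphi$ come for free from universal properties rather than from an unverified formula, and uniqueness is immediate as well. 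It is worth noting that the two constructions agree -- unwinding your map gives $\varphi(m/p^t,n)=pr^{t+1}m+(1-pr)n$, exactly the paper's formula -- so your argument can be read as a structural justification of the paper's ansatz. The forward direction is identical in both proofs, taking $r_{p,a}=\varphi(1/p,0)$, though you spell out the verification of the two identities, which the paper omits.
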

    \begin{proof}
        $\implies$
        Let $r_{p,a} := \varphi(\frac{1}{p},0)$.
        
        $\impliedby$
        For the other direction, consider the morphism given by
        $$
        \varphi:(m/p^t, n) \mapsto pr_{p,a}^{t+1}m + (1-pr_{p,a})n.
        $$ 
        Repeatedly using the condition $pr_{p,a}^2 = r_{p,a}$ yields that given two different ways of writing the fractional part $m/p^t$ and $mp^k/p^{t+k}$ give the same value, i.e.
        $$
        \varphi(m/p^t, n) = pr_{p,a}^{t+1}m + (1-pr_{p,a})n = pr_{p,a}^{t+k+1}mp^k + (1-pr_{p,a})n = \varphi(mp^k/p^{t+k+1},n).
        $$
        and thus we have well-definiteness.
        We leave the straightforward check that it indeed satisfies the conditions for it to be a ring morphism to the reader.\\
    \end{proof}
    
    These two lemmas help us determine when a morphism exists between any two basic solid rings.
    Let us investigate the four possible simple cases:\\

    \begin{enumerate}
        \item $\Z[1/p] \times \Z/p^{a}\Z \to \Z/[1/q]$.
        Take $x \in \Z[1/q]$ to be $r_{p,a}$ as in the notation of Lemma \ref{lm:explicit_meaning}.
        The first equation yields $px^2 = x$ in $\Z/[1/q]$, which can only happen whenever $q = p$ and $x = 1/p$.\\

        \item $\Z[1/p] \times \Z/p^{a}\Z \to \Z/n\Z$.
        Take $x \in \Z[1/q]$ to be $r_{p,a}$ as in the notation of Lemma \ref{lm:explicit_meaning}.
        
        If $a < v_p(n)$, then $\frac{n}{p^{a+1}}$ is an element of $\Z/n\Z$ and the second equation yields
        $$
        0 = nx = \frac{n}{p^{a+1}} p^{a+1} x = \frac{n}{p^{a+1}} p^a = \frac{n}{p},
        $$
        which is not true in $\Z/n\Z$ and so no such morphism exists.

        If $a \geq v_p(n)$ then by taking the factorization $\Z/n\Z \cong \Z/p^{v_p(n)}\Z \times \Z/\frac{n}{p^{v_p(n)}}\Z$ we get a morphism by the universal property of the product induced by the composition with the natural morphisms $\Z[1/p] \to \Z/\frac{n}{p^{v_p(n)}}\Z$ and $\Z/p^a\Z \to \Z/p^{v_p(n)}\Z$ after the respective projections.
        
        \item $\Z/m\Z \to \Z[1/p]$. By Lemma \ref{lm:explicit_meaning_fin} there is no such morphism.

        \item $\Z/m\Z \to \Z/n\Z$. By Lemma \ref{lm:explicit_meaning_fin} such a morphism exists if and only if $n | m$.\\
    \end{enumerate}

    Since a morphism to a product is equivalent to a pair of morphisms, one to each coordinate, the only cases when there is a morphism between basic solid rings are:

    \begin{itemize}
        \item A morphism
            \[
    \tag{3.A}\label{S1}
    \Z[1/p] \times \Z/p^{a}\Z \twoheadrightarrow \Z[1/p] \times \Z/p^{b}\Z.
    \]
        whenever $b \leq a$.
        
        \item A morphism
    \[
    \tag{3.B}\label{S2}
    \Z[1/p] \times \Z/p^{a}\Z \twoheadrightarrow \Z/n\Z,
    \]
        whenever $a \geq v_p(n)$.

        \item A morphism
    \[
    \tag{3.C}\label{S3}
    \Z/m\Z \twoheadrightarrow \Z/n\Z,
    \]
        whenever $n \mid m$.
    \end{itemize}
    In all these cases, it is easy to check that our morphism is surjective and, furthermore, by the solidity of the domain it is unique.\\

    These observations immediately determine the core of any ring with nonzero characteristic:
    \begin{lemma}
    \label{lem:char_not0_core}
        Let $R$ be a ring with characteristic $c := \chara(R) \not= 0$.
        Then the core of $R$ is given by  
        $$
        cR = \Z/c\Z.
        $$
    \end{lemma}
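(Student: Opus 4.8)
The plan is to read this off directly from Theorem~\ref{thm:submain}, which expresses $cR$ as the coproduct of all basic solid subrings of $R$. So the whole task reduces to identifying exactly which basic solid rings occur as (unital) subrings of $R$, and I claim the only one is $\Z/c\Z$.

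First I would note that the prime subring of $R$, namely the image $\Z\cdot 1_R$ of the unique ring map $\Z\to R$, is isomorphic to $\Z/c\Z$ by the very definition of the characteristic, and that it is a basic solid ring: it is a cyclic ring, hence of type (1), hence solid, and cyclic rings are basic by definition. Thus $\Z/c\Z$ is at least one basic solid subring of $R$.

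Next I would show it is the only one. Any subring $B\subseteq R$ is unital and therefore contains the prime subring $\Z\cdot 1_R\cong\Z/c\Z$; in particular $c\cdot 1_B=0$, so $B$ has nonzero characteristic dividing $c$. This rules out the infinite basic solid rings $\Z[1/p]\times\Z/p^a\Z$: in such a ring the element $(1,1)$ has infinite additive order, since $1$ already does in the factor $\Z[1/p]$ (and this holds even when $a=0$, where the ring is just $\Z[1/p]$), so its characteristic is $0$, a contradiction. Hence $B$ must be a finite basic solid ring $\Z/n\Z$; but $\Z/n\Z$ is generated as a ring by $1_B=1_R$, so $B=\Z\cdot 1_R=\Z/c\Z$ and $n=c$. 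Since the prime subring is unique, the set of basic solid subrings of $R$ is the singleton $\{\Z\cdot 1_R\}$, and its coproduct — the coproduct of a one-element family, cf.\ Proposition~\ref{prp:co_solid_2} for the collapsing of repeated copies — is $\Z/c\Z$ itself, giving $cR=\Z/c\Z$.

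I do not anticipate a serious obstacle here: the argument is essentially bookkeeping on top of Theorem~\ref{thm:submain}. The only points that need a moment's care are the characteristic-zero observation that excludes all infinite basic solid rings, and the (conventional) fact that subrings are unital, hence contain the prime subring $\Z/c\Z$ — without the unitality convention one would have to argue slightly differently, but it is the standing assumption of the paper.
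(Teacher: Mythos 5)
Your proof is correct and follows essentially the same route as the paper: invoke Theorem~\ref{thm:submain}, observe that $\Z/c\Z$ is the unique finite basic solid subring (being the prime subring), and rule out the infinite basic solid rings on characteristic grounds. Your write-up is in fact slightly cleaner on the last point --- the paper's phrasing of why no $\Z[1/p]\times\Z/p^a\Z$ embeds is garbled, whereas you correctly note these rings have characteristic zero while every unital subring of $R$ inherits nonzero characteristic.
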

    \begin{proof}
    	By Theorem \ref{thm:submain} we have to compute the colimit of the basic solid rings it contains.
    	On one hand, $\Z/c\Z$ is the only finite basic solid ring contained in $R$.
    	On the other hand, no infinite basic solid ring is strictly contained in $R$, since the characteristic of these rings is distinct than $0$ and, thus, no injection is possible.
    	We are done.
    \end{proof}

    The rest of the paper is devoted to compute the core of a ring with zero characteristic, a task that we successfully undertake.
    We start by establishing when an infinite basic solid ring can be embedded into $R$:

    \begin{lemma}
    \label{lm:char0_core}
    Let $R$ be a ring of characteristic zero, and assume there exists a morphism
    $$
    \Z[1/p] \times \Z/p^a\Z \to R
    $$
    for some prime $p$ and a nonnegative integer $a$.
    Then there exists a nonnegative integer $b \leq a$ such that there is an injective morphism
    $$
    \Z[1/p] \times \Z/p^b\Z \hookrightarrow R.
    $$
    \end{lemma}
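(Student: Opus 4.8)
The plan is to apply Lemma~\ref{lm:explicit_meaning} twice. First, the hypothesis produces, via that lemma, an element $r := r_{p,a} \in R$ with $pr^2 = r$ and $p^{a+1}r = p^a$. The organizing observation is that $e := pr$ is idempotent, since $e^2 = p^2r^2 = p(pr^2) = pr = e$, and that the second relation says precisely $p^a e = p^a$, equivalently $p^a(1-e) = 0$. Hence the additive order of $1-e$ divides $p^a$, so it equals $p^b$ for a unique integer $0 \le b \le a$ (with $b = 0$ exactly when $e = 1$). I will show this $b$ works.

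To build the morphism I would again use Lemma~\ref{lm:explicit_meaning}, now in the easy direction and for the pair $(p,b)$, taking the \emph{same} element $r$: the relation $pr^2 = r$ is unchanged, and $p^{b+1}r = p^b e = p^b$ holds by the very definition of $b$. This produces $\psi \colon \Z[1/p] \times \Z/p^b\Z \to R$, given by the formula from that lemma, namely $\psi(m/p^t, n) = pr^{t+1}m + (1-pr)n$; using $pr^{t+1} = e r^t$ this reads $\psi(m/p^t, n) = e r^t m + (1-e)n$.

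It remains to prove that $\psi$ is injective, which is the substantive part. Since $e$ and $1-e$ are complementary orthogonal idempotents, $\psi$ carries the first factor into $eR$ and the second into $(1-e)R$; multiplying an identity $\psi(x,y) = 0$ by $e$ and then by $1-e$ reduces the problem to injectivity of the two coordinate restrictions $\alpha \colon \Z[1/p] \to eR$ and $\beta \colon \Z/p^b\Z \to (1-e)R$ separately. For $\beta$, the kernel is an ideal of $\Z/p^b\Z$, hence of the form $(p^k)$ for some $k \le b$; since $p^k$ lying in the kernel forces $p^k(1-e) = 0$, minimality of $b$ gives $k = b$, so the kernel is $(p^b) = 0$ and $\beta$ is injective. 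For $\alpha$, an element $m/p^t$ of the kernel satisfies $e r^t m = 0$; multiplying by $p^t$ and using $r^t p^t = (pr)^t = e^t$, hence $e r^t p^t = e$, we obtain $em = 0$, and then $p^a m = (p^a e) m = p^a(em) = 0$ in $R$, so the characteristic zero hypothesis forces $m = 0$ and $\alpha$ is injective. This last implication is where the assumption $\chara(R) = 0$ genuinely enters, and it is the step I expect to demand the most care in a full write-up; granting it, $b \le a$ together with the injectivity of $\psi$ yields the asserted embedding $\Z[1/p] \times \Z/p^b\Z \hookrightarrow R$.
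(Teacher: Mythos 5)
Your proof is correct and follows essentially the same route as the paper's: the paper factors the given morphism through its image and argues that characteristic zero forces the kernel into the $\Z/p^a\Z$ component, which is exactly what your idempotent $e = pr$ and the computation $em = 0 \Rightarrow p^a m = 0$ make explicit. Your version via Lemma~\ref{lm:explicit_meaning} simply carries out in detail the kernel analysis that the paper states in one line.
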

    \begin{proof}
    Any morphism can be factored as a quotient map followed by an inclusion.
    If an element whose first coordinate is nonzero belongs to the kernel, then $\chara(R) \not= 0$, which contradicts our assumption.
    Hence, the kernel must be contained in the $\Z/p^a\Z$ component, ensuring the factorization of this component through a subring $\Z/p^b\Z$.
    \end{proof}

    In the next section, we leverage the tower of inclusions perspective to provide a precise characterization of solid rings of type (4).

    \section{Solid rings of type (4)}

    It remains to analyze coproducts of infinite basic solid rings in which each prime appears at most once and an infinite number of the exponents are positive.
    In this section, we begin by introducing the tower of inclusions perspective, which provides a concrete description of solid rings of type (4).
    We then verify that these rings are indeed genuinely new.
    Finally, we examine how these rings naturally embed as subrings within the original rings proposed by Bousfield and Kan in their foundational work.
    \subsection{The tower of inclusions perspective}

    Consider a family of infinite basic solid rings
    $$
    \{\Z[1/p] \times \Z/p^{e_p}\Z\}_{p \in J},
    $$
    where $J$ is an infinite collection of primes, and let $K := \{p \in J : e_p \geq 1\}$ be an infinite subset of $J$.  
    Without loss of generality, assume $K = \{p_1, p_2, p_3, \dots\}$ is ordered by increasing magnitude, so that we may define $K_n := \{p_i\}_{i = 1}^n$ and $J_n := (J \setminus K) \cup K_n$.
    
    Because colimits commute with colimits, we can first take the coproduct of all rings corresponding to primes in $J_0 = J \setminus K$ as a single step, obtaining $\Z[J_0^{-1}]$.
    Then, by sequentially taking coproducts with one additional prime at each step, we obtain the following directed system of inclusions:
    \[
    \tag{3.A}\label{SD}
    \begin{tikzcd}
    {\Z[J_0^{-1}]} \arrow[r, hook] & \cdots \arrow[r, hook] & {\Z[J_n^{-1}] \times \prod\limits_{p \in K_n} \Z/p^{e_p}\Z} \arrow[r, hook] & \cdots,
    \end{tikzcd}
    \]
    where each inclusion is induced by the universal property of coproducts.
    %
    %
    
    Let $S$ be the colimit of this sequential system, its elements can be understood as elements of some $\Z[J_n^{-1}] \times \prod\limits_{p \in K_n} \Z/p^{e_p}\Z$ in their minimal form, i.e. those that do not belong to the image of any earlier morphism.  
    The ring operations in $S$ are inherited from the colimit structure, meaning that addition and multiplication between any two elements take place in the largest ring containing both.

    \subsection{Classification of solid rings of type (4)}

    We refer to the data given by these infinite basic solid rings as $(J, \{e_p\}_{p \in J})$ where $J$ is an infinite set of primes and $\{e_p\}_{p \in J}$ is a family of nonnegative exponents, with the condition that infinitely many of them are positive.
    
    To confirm that these data define genuinely new rings, distinct from those of type (1), (2), or (3), as well as from any other type (4) ring described by different data, we examine their torsion subring and the corresponding quotient, \cite[Example 3.2]{BK71}:
    \begin{itemize}
        \item The torsion subring:  
        $$
        S^t = \bigoplus\limits_{p \in K} \Z/p^{e_p}\Z.
        $$

        \item The quotient by torsion:
        $$
        S/S^t = \Z[J^{-1}].
        $$
    \end{itemize}
    These invariants show that different choices of \( (J, \{e_p\}) \) yield non-isomorphic rings, completing the classification.


    \subsection{The core of the infinite product $\Z[J^{-1}] \times \prod\limits_{p \in K} \Z/p^{e_p}\Z$}
    \label{ss:inf_prod}
    \noindent
    
    Let
    $$
    R := \Z[J^{-1}] \times \prod\limits_{p \in K} \Z/p^{e_p}\Z.
    $$
    In this subsection, we give a precise description of the core $cR$.

    Since $\chara{R} = 0$, no finite basic solid ring embeds into $R$, so we only need to determine which rings of the form 
    $$
    \Z[1/p] \times \Z/p^a\Z
    $$
    for some prime $p$ and $a \geq 0$, are contained in $R$.
    
    For a morphism from such a ring to $R$ to exist, the map must be defined coordinate-wise, which requires $p \in J$ and $e_p \leq a$.
    The injection occurs when $a = e_p$, and the coproduct of these inclusions gives precisely the largest solid subring of $R$, namely $cR$.\\
    
    To determine the explicit inclusion $cR \subseteq R$, consider the sequential diagram of inclusions defined in (\ref{SD}) above and, for any nonnegative integer $n$, the inclusion
    $$
    \Z[J_n^{-1}] \times \prod\limits_{p \in K_n} \Z/p^{e_p}\Z \hookrightarrow R
    $$
    defined coordinate-wise in the only possible way.
    This gives rise to the following factorization:
    $$
    \begin{tikzcd}
    {\Z[J_n^{-1}] \times \prod\limits_{p \in K_n} \Z/p^{e_p}\Z} \arrow[d, hook] \arrow[rd, hook] &   \\
    S \arrow[r, hook]                                                                            & R
    \end{tikzcd},
    $$
    where $S$ is the solid ring defined as the colimit of the diagram given by its corresponding tower of inclusions, meaning the bottom inclusion is precisely the inclusion $cR \subseteq R$.
    
    With this description of $S$ as the colimit of a tower of inclusions we have that elements of $S$ are identified with elements living in one of the rings on that tower, where operations take place in the ring where the element that lives further lives.
    
    Then any element of $S$ can be seen inside $R$ as the image of an element 
    $$
    (r/s, a_1, \dots, a_n) \in
    \Z[J_n^{-1}] \times \prod\limits_{p \in K_n} \Z/p^{e_p}\Z.
    $$
    Looking at this element inside $R$ through the factorization map $S \hookrightarrow R$ we obtain an element $(r/s, a_1, a_2, a_3, \dots) \in R$ such that $a_m = r/s$ in $\Z/p_i^{e_{p_i}}\Z$.
    We get that $cR$ consists precisely of all the elements $(r/s, a_1, a_2, a_3, \dots) \in R$ such that only finitely many of the $a_i's$ are distinct to $r/s$ modulo $\Z/p_i^{e_{p_i}}\Z$, which is well defined in all but finitely many of the factors since $s$ has a finite amount of prime factors.
    
    This reasoning solves a question posed by John Baez in the comments section of \cite{Bae21}.
    More concrete examples and computations can be found at the end in Section \ref{sec:compu}.

    \section{All solid rings}

    As noted in Lemma \ref{lem:char_not0_core}, the core of a ring with nonzero characteristic is fully determined by its characteristic.
    On the other hand, as established in the previous section, the core of a characteristic zero ring $R$ is precisely determined by the set of rings of the form
    $$
    \Z[1/p] \times \Z/p^a\Z
    $$
    for some prime $p$ and $a \geq 0$, that embed into $R$.
    More generally, the core of any ring $R$ is completely determined by the basic solid rings that embed into it.
    Indeed:
    \begin{itemize}
        \item
        For a ring with nonzero characteristic, no infinite basic solid ring embeds into it.

        \item 
        For a ring of characteristic zero, no finite basic solid ring embeds into it.
    \end{itemize}

    Nevertheless, for a ring with nonzero characteristic, it is still meaningful to consider morphisms from infinite basic solid rings into it, even though these morphisms are not embeddings, as the following lemma shows:
    
    \begin{lemma}
        Let $R$ be a ring with characteristic $c := \chara{R} \not= 0$.
        Then for any prime $p$, there exists a morphism
        $$
        \Z[1/p] \times \Z/p^{a}\Z \to R
        $$
        if and only if $a \geq v_p(c)$.
    \end{lemma}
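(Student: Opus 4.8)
The plan is to reduce everything to Lemma~\ref{lm:explicit_meaning}, which tells us that a morphism $\Z[1/p]\times\Z/p^a\Z\to R$ exists if and only if $R$ contains an element $r$ with $pr^2=r$ and $p^{a+1}r=p^a$. Writing $c=p^k m$ with $k=v_p(c)$ and $\gcd(p,m)=1$, the statement becomes the assertion that such an $r$ exists precisely when $a\ge k$; everything then comes down to elementary arithmetic in the prime subring $\Z/c\Z\subseteq R$ together with a single appeal to the lemma.

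For the implication ``$a\ge v_p(c)$ implies a morphism exists'' I would exhibit $r$ explicitly as the image of an integer. Using the Chinese Remainder Theorem $\Z/c\Z\cong\Z/p^k\Z\times\Z/m\Z$, pick an integer $r_0$ with $r_0\equiv 0\pmod{p^k}$ and $r_0\equiv p^{-1}\pmod m$, the latter inverse existing because $\gcd(p,m)=1$. One checks $pr_0^2\equiv r_0$ and $p^{a+1}r_0\equiv p^a$ modulo $p^k$ --- where both sides vanish, using $a\ge k$ for the second congruence --- and modulo $m$ --- by direct substitution of $r_0\equiv p^{-1}$ --- hence modulo $c$. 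Since $c\cdot 1_R=0$, the element $r:=r_0\cdot 1_R$ satisfies the two relations in $R$, and Lemma~\ref{lm:explicit_meaning} produces the morphism.

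For the converse I would argue by contradiction. If a morphism exists but $a<v_p(c)$, i.e.\ $a+1\le k$, then Lemma~\ref{lm:explicit_meaning} gives $r\in R$ with $p^{a+1}r=p^a$ (only this relation is needed), while $d:=c/p^{a+1}=p^{\,k-a-1}m$ is a positive integer. Multiplying $p^{a+1}r=p^a$ by $d\cdot 1_R$ and using $d\,p^{a+1}=c$ yields $0=(c/p)\cdot 1_R$ exactly, contradicting $\chara(R)=c$ since $0<c/p<c$. Therefore $a\ge v_p(c)$.

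I do not expect a real obstacle here. The one point to be careful about is that the morphism need \emph{not} split as a pair of morphisms out of $\Z[1/p]$ and $\Z/p^a\Z$ separately --- indeed, when $c$ is not a power of $p$ the ring $\Z/p^a\Z$ does not map to $R$ at all --- so the argument must be routed through the single element $r$ of Lemma~\ref{lm:explicit_meaning} rather than through the product decomposition, which is precisely what the above does. Alternatively, in the converse one could observe that $e:=pr$ is idempotent, split $R\cong eR\times(1-e)R$ with $p$ invertible in $eR$ and $p^a=0$ in $(1-e)R$, and compare characteristics; but the divisibility computation above is shorter and sidesteps that case analysis.
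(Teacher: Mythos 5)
Your proof is correct. It differs from the paper's in two ways, both in the direction of being more explicit. For the converse, the computation is essentially the paper's: the paper also extracts $r$ from Lemma~\ref{lm:explicit_meaning}, multiplies by $d = c/p^{v_p(c)}$ and derives $c/p = 0$; the only difference is that the paper runs this at the single boundary case $a = v_p(c)-1$ and then invokes the tower of surjections $\Z[1/p]\times\Z/p^{n+1}\Z \twoheadrightarrow \Z[1/p]\times\Z/p^{n}\Z$ to propagate non-existence downward, whereas you handle every $a < v_p(c)$ uniformly by multiplying by $c/p^{a+1}$, which removes the need for the tower altogether. For the forward direction the paper simply composes the surjection $\Z[1/p]\times\Z/p^{v_p(c)}\Z \twoheadrightarrow \Z/c\Z$ (one of the morphisms of type (3.B) already established in Section~3) with the inclusion $\Z/c\Z \hookrightarrow R$, again using the tower to cover $a > v_p(c)$; you instead manufacture the witness $r_0 \in \Z/c\Z$ by the Chinese Remainder Theorem and feed it back into Lemma~\ref{lm:explicit_meaning}. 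This is a touch longer but self-contained, and it makes visible exactly which element of the prime subring does the work. Your closing remark --- that the argument must go through the single element $r$ rather than through a splitting of the domain, since $\Z/p^a\Z$ generally does not map to $R$ --- is a correct and worthwhile observation; the paper's route avoids the issue only because the relevant splitting happens in the target $\Z/c\Z$, not in the source.
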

    \begin{proof}
        Since we have a tower of surjections
        $$
        \cdots \twoheadrightarrow
        \Z[1/p] \times \Z/p^{n+1}\Z \twoheadrightarrow
        \Z[1/p] \times \Z/p^{n}\Z \twoheadrightarrow
        \cdots \twoheadrightarrow \Z[1/p]
        $$
        it suffices to verify that:
        \begin{enumerate}
            \item[1.]
            A morphism exists for $a = v_p(c)$.

            \item[2.]
            No morphism exists for $a = v_p(c) - 1$ whenever $p \mid c$.
        \end{enumerate}
        The existence of a morphism for $a = v_p(c)$ follows directly from the composition
        $$
        \Z[1/p] \times \Z/p^{v_p(c)}\Z \twoheadrightarrow
        \Z/c\Z \hookrightarrow
        R.
        $$
        To show that no morphism exists for $a = v_p(c) - 1$ when $p \mid c$, suppose otherwise.
        Then, by Lemma \ref{lm:explicit_meaning}, such a morphism would imply the existence of an element $r \in R$ such that
        $$
        p^{v_p(c)}r = p^{v_p(c)-1}.
        $$
        Writing $c = p^{v_p(c)}d$ with $\gcd(p,d) = 1$, we obtain
        $$
        0 = cr = dp^{v_p(c)}r = dp^{v_p(c)-1} \not= 0,
        $$
        a contradiction.
    \end{proof}

    This information leads to a new classification of solid rings, which we now summarize in terms of an exponent function and a binary parameter:

    \subsection{A classification of solid rings}
    \label{ssc:class}

    The following way of presenting the data will be useful in the following section.
    A solid ring is given by the following data:
    \begin{enumerate}
        \item
        An exponent function $e: \P \to \{0, 1, 2, \dots\} \cup \{+\infty\}$.
        \item 
        A binary parameter $q \in \{0,1\}$, where $q = 0$ if either:
        \begin{itemize}
            \item 
            There exists a prime $p$ with $e(p) = +\infty$, or

            \item 
            There are infinitely many primes $p$ with $e(p) > 0$.
        \end{itemize}
    \end{enumerate}

    These data uniquely specify a solid ring, where the parameter $q$ distinguishes between finite and infinite constructions:
    
    \begin{itemize}
        \item
        If $q = 0$, the ring is obtained as a coproduct of the infinite basic solid rings
        $$
        \Z[1/p] \times \Z/p^{e(p)}\Z,
        $$
        where $p$ runs through all the prime numbers such that $e(p) < +\infty$.

        \item 
        If $q = 1$, the solid ring is finite and the exponent function provides the prime factorization data, the ring is the cyclic ring obtained as the product of the rings
        $$
        \Z/p^{e(p)}\Z,
        $$
        where $p$ runs through the finite amount of prime numbers with $e(p) > 0$.
    \end{itemize}

    \section{Limits and colimits of solid rings}

    As described in Subsection \ref{ssc:class} above, we can represent the core of a ring $R$ as a pair $(cR, q)$ where:
    \begin{itemize}
        \item 
        The function
    $$
    cR: \P \to \{0, 1, 2, \dots\} \cup \{+\infty\},
    $$
    assigns to each prime $p$ the minimal exponent $a_p$ such that there exists a morphism
    $$
    \Z[1/p] \times \Z/p^{a_p}\Z \to R.
    $$
    
        \item 
    The binary parameter $q$ indicates whether the characteristic of $R$ is zero or nonzero.
    \end{itemize}
	
	In this section, we will denote with $cR$ not the core of $R$ but its corresponding function.

    \subsection{Limits}

    The following lemma describes how the function associated with a limit relates to the functions of the underlying objects in the diagram.
    Notably, in this first result, the parameter $q$ does not play a role.

    \begin{lemma}
    \label{lm:limit_c}
        Let $\{R_i\}_{i \in I}$ be a family of rings that forms a diagram whose limit is $R$.
        Then
        $$
        cR = \sup\limits_{i \in I} cR_i.
        $$
    \end{lemma}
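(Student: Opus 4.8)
The plan is to reduce everything to one observation: the rings $\Z[1/p] \times \Z/p^a\Z$ are solid, so any family of maps out of one of them into the objects $R_i$ is automatically compatible and hence glues to a map into the limit.

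First I would fix a prime $p$ and record, using the tower of surjections
$$
\cdots \twoheadrightarrow \Z[1/p] \times \Z/p^{a+1}\Z \twoheadrightarrow \Z[1/p] \times \Z/p^{a}\Z \twoheadrightarrow \cdots \twoheadrightarrow \Z[1/p],
$$
that for any ring $T$ the set of exponents $a$ admitting a morphism $\Z[1/p] \times \Z/p^a\Z \to T$ is upward closed in $\{0,1,2,\dots\}$: precomposing a morphism for exponent $a$ with the surjection above produces one for exponent $a+1$. In particular this set is either empty or of the form $\{a : a \geq cT(p)\}$, so that there is a morphism $\Z[1/p] \times \Z/p^a\Z \to T$ if and only if $a \geq cT(p)$ (vacuously false for every finite $a$ when $cT(p) = +\infty$). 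I would apply this with $T = R$ and with $T = R_i$.

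Next I would invoke the universal property of the limit. Writing the diagram as a functor $i \mapsto R_i$ with transition maps $R_\phi$ for the arrows $\phi \colon i \to j$ of $I$, a morphism $X \to R$ is the same datum as a cone $\{f_i \colon X \to R_i\}_{i \in I}$, i.e. a family with $R_\phi \circ f_i = f_j$ for every $\phi$. Taking $X = \Z[1/p] \times \Z/p^a\Z$, which is solid, the two morphisms $R_\phi \circ f_i$ and $f_j$ from $X$ to $R_j$ necessarily agree; hence every family $\{f_i \colon X \to R_i\}_i$ is already a cone. Therefore a morphism $\Z[1/p] \times \Z/p^a\Z \to R$ exists if and only if, for every $i \in I$, a morphism $\Z[1/p] \times \Z/p^a\Z \to R_i$ exists.

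Combining the two steps, a morphism $\Z[1/p] \times \Z/p^a\Z \to R$ exists iff $a \geq cR_i(p)$ for all $i$, iff $a \geq \sup_{i \in I} cR_i(p)$; taking the least such $a$ gives $cR(p) = \sup_{i \in I} cR_i(p)$, the value $+\infty$ being handled uniformly, since if no finite $a$ works for all $i$ then none works for $R$. As $p$ was arbitrary, this is the claimed equality of functions. I do not expect a genuine obstacle: the only point needing care is that solidity of $\Z[1/p] \times \Z/p^a\Z$ makes the compatibility condition for a cone vacuous, and the remainder is bookkeeping with the order on $\{0,1,2,\dots\} \cup \{+\infty\}$ together with the already established description of maps out of basic solid rings. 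The degenerate case $I = \emptyset$ is consistent as well: then $R$ is the zero ring, every $\Z[1/p] \times \Z/p^a\Z$ maps to it, so $cR \equiv 0$, which is the supremum of the empty family.
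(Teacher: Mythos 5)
Your proof is correct and follows essentially the same route as the paper: both arguments hinge on the fact that solidity of $\Z[1/p] \times \Z/p^a\Z$ makes any family of maps into the $R_i$ automatically a cone, so such a map factors through the limit, while the reverse inequality comes from composing with the projections. Your explicit verification that the set of admissible exponents is upward closed is a point the paper leaves implicit (it relies on the surjections of type (3.A)), but it is the same idea.
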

    \begin{proof}
        Fix a prime $p$.
        We prove that
        $$
        cR(p) = \sup\limits_{i \in I} cR_i(p).
        $$

        Since $R$ is the limit of the $R_i$, there are natural projections $R \to R_i$, and for any $i \in I$, we have
        $$
        cR(p) \geq cR_i(p),
        $$
        because a morphism $\Z[1/p] \times \Z/p^{a_p}\Z \to R$ induces a corresponding morphism into each $R_i$.
        This gives one side of the inequality.

        Let $M :=\sup\limits_{i \in I} cR_i(p)$.
        If  $M = +\infty$, the equality follows immediately.
        Otherwise, since $\Z[1/p] \times \Z/p^M\Z$ maps to every $R_i$ in a compatible way with the diagram we are working with since $\Z[1/p] \times \Z/p^M\Z$ is a solid ring, it must factor through the limit $R$, giving a morphism 
        $$
        \Z[1/p] \times \Z/p^M\Z \to R.
        $$
        This shows the other side of the equality,
        $$
        cR(p) \leq \sup\limits_{i \in I} cR_i(p),
        $$
        completing the proof.
    \end{proof}

    We now turn our attention to the parameter $q$, which distinguishes between the cases of characteristic zero and positive characteristic.
    
    \begin{lemma}
    \label{lm:limit_q}
        Let $\{R_i\}_{i \in I}$ be a family of rings forming a diagram whose limit is $R$.
        Then $q = 1$ if and only if every ring $R_i$ in the diagram satisfies $q_i = 1$, $cR(p) < +\infty$ for any prime $p$, and there are at most finitely many primes with $cR(p) > 0$.
    \end{lemma}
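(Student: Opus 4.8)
The plan is to reduce everything to a comparison between $\chara(R)$ and the characteristics of the $R_i$, relying on two ingredients already available: first, that by construction $q=1$ is equivalent to $\chara(R)\neq 0$ (and likewise $q_i=1 \iff \chara(R_i)\neq 0$); second, Lemma~\ref{lm:limit_c}, which gives the pointwise identity $cR=\sup_{i\in I} cR_i$. I will also use the concrete description of the limit in the category of rings as the subring of $\prod_{i\in I}R_i$ consisting of the compatible tuples, so that an element of $R$ is zero exactly when all of its components are; in particular $n\cdot 1_R=0$ for a positive integer $n$ as soon as $n\cdot 1_{R_i}=0$ for every $i$.

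For the forward implication, suppose $\chara(R)=c\neq 0$. Applying each projection $\pi_i\colon R\to R_i$ to the relation $c\cdot 1_R=0$ gives $c\cdot 1_{R_i}=0$, hence $\chara(R_i)\mid c$; in particular every $R_i$ has nonzero characteristic, i.e.\ $q_i=1$. For the remaining two conditions I invoke Lemma~\ref{lem:char_not0_core} to get $cR=\Z/c\Z$, and then recall that for a ring of characteristic $c\neq 0$ a morphism $\Z[1/p]\times\Z/p^a\Z\to R$ exists precisely when $a\ge v_p(c)$, so $cR(p)=v_p(c)$. This is finite for every prime $p$, and it is positive only for the finitely many primes dividing $c$; both conditions hold.

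For the converse, assume all three conditions. Since $q_i=1$, each $R_i$ has characteristic $c_i\neq 0$, so as above $cR_i(p)=v_p(c_i)$, and Lemma~\ref{lm:limit_c} yields $cR(p)=\sup_{i\in I}v_p(c_i)$. Let $p_1,\dots,p_k$ be the finitely many primes with $cR(p_j)>0$ (using the third condition) and put $e_j:=cR(p_j)$, each of which is finite (using the second condition); set $n:=\prod_{j=1}^{k}p_j^{\,e_j}$. For every $i$ and every prime $p$ we have $v_p(c_i)\le cR(p)$, so $v_p(c_i)=0$ unless $p\in\{p_1,\dots,p_k\}$ and $v_{p_j}(c_i)\le e_j$ otherwise; hence $c_i\mid n$. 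Therefore $n\cdot 1_{R_i}=0$ for all $i$, and consequently $n\cdot 1_R=0$ in the limit, so $\chara(R)\neq 0$, i.e.\ $q=1$.

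I do not expect a deep obstacle here; the point requiring care is that the second and third conditions together are exactly what is needed to extract a \emph{single} integer $n$ annihilating $1_{R_i}$ for all $i$ at once (the second condition making the relevant exponents genuine integers, the third making their product finite), after which the subring-of-product description of the limit is what transports the annihilation up to $R$. I would also note the degenerate case $I=\emptyset$, where $R$ is the zero ring, all three conditions hold vacuously, and the argument goes through with $n=1$.
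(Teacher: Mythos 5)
Your proof is correct and follows essentially the same route as the paper: both directions hinge on extracting the single integer $n=\prod_p p^{\,cR(p)}$ from the two finiteness hypotheses and observing that it annihilates $1_{R_i}$ for every $i$. The only cosmetic difference is that the paper transports this back to $R$ by mapping the solid ring $\Z/n\Z$ compatibly into the diagram and factoring through the limit, whereas you use the explicit description of the limit as compatible tuples in the product; your forward direction is in fact somewhat more detailed than the paper's, which simply asserts the two finiteness conditions "follow directly."
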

    \begin{proof}
        Suppose $q = 1$.
        Then there exists a morphism $\Z/\chara(R)\Z \to R \to R_i$ for each $i \in I$, which implies that $R_i$ has nonzero characteristic.
        Thus $q_i = 1$ for every $i \in I$.
        Furthermore, the conditions $cR(p) < +\infty$ for all primes $p$, and the finiteness of $\{p \in \P : cR(p) > 0\}$ follows directly, since otherwise, we would have $q = 0$, contradicting our assumption.
        
        For the converse, assume that every ring in the diagram satisfies $q_i = 1$.  
        Since we are given that $cR(p) < +\infty$ for all $p$ and that only finitely many primes satisfy $cR(p) > 0$, we can construct the solid ring  
        $$
        \prod\limits_{p \in \P} \Z/p^{cR(p)}\Z,
        $$
        which maps in a compatible way into every $R_i$, and thus it factor through $R$ by the universal property of limits.  
        This ensures that $q = 1$, completing the proof.
    \end{proof}
    
    Both lemmas together provide a complete description of how the exponent function and the parameter $q$ behave under limits, allowing us to determine the core of a limit of rings explicitly in terms of the cores of the underlying objects in the diagram.

    \subsection{Colimits}

    In this subsection we focus on the core construction for arbitrary colimits.
    
    \begin{lemma}
    \label{lm:colimit_c}
        Let $\{R_i\}_{i \in I}$ be a family of rings forming a diagram whose colimit is $R$.
        Then
        $$
        cR \leq \min\limits_{i \in I} cR_i.
        $$
    \end{lemma}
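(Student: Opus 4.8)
The plan is to reduce at once to a pointwise statement in the prime and then invoke the one structural feature of a colimit that is needed, namely the canonical legs out of the diagram. Fix a prime $p$; recall that $cR(p)$ is the least exponent $a$ for which a morphism $\Z[1/p]\times\Z/p^{a}\Z \to R$ exists, with the convention $cR(p) = +\infty$ when there is none. It suffices to prove $cR(p) \le cR_i(p)$ for every $i \in I$: since the target poset $\{0,1,2,\dots\}\cup\{+\infty\}$ is a complete lattice, $\min_{i\in I} cR_i(p)$ is the infimum of these values and the pointwise bounds pass to it, and as $p$ is arbitrary this gives $cR \le \min_{i\in I} cR_i$ as functions.

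Next I would fix an index $i$. If $cR_i(p) = +\infty$ there is nothing to check, since $cR(p) \le +\infty$ trivially. Otherwise set $a := cR_i(p) < +\infty$; by definition of $cR_i(p)$ there is a ring homomorphism $\varphi \colon \Z[1/p]\times\Z/p^{a}\Z \to R_i$. Composing with the colimit leg $\lambda_i \colon R_i \to R$ yields a homomorphism $\lambda_i\circ\varphi \colon \Z[1/p]\times\Z/p^{a}\Z \to R$. So a morphism out of $\Z[1/p]\times\Z/p^{a}\Z$ into $R$ does exist, and by the minimality defining $cR(p)$ we conclude $cR(p) \le a = cR_i(p)$, as wanted.

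I do not anticipate a genuine obstacle here. In contrast with the limit case (Lemma \ref{lm:limit_c}), where one had to use the solidity of $\Z[1/p]\times\Z/p^{M}\Z$ to build a compatible cone and factor it through the limit, the colimit direction uses only the defining cocone and composition of morphisms, with no appeal to solidity. The point worth flagging is that in general only an inequality holds: a homomorphism $\Z[1/p]\times\Z/p^{a}\Z \to R$ out of the colimit need not factor through any single $R_i$, so $cR(p)$ may be strictly below every $cR_i(p)$, and pinning down equality needs further hypotheses. As with Lemma \ref{lm:limit_c}, the parameter $q$ plays no role in this statement.
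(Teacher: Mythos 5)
Your proof is correct and follows essentially the same route as the paper: take the witnessing morphism $\Z[1/p]\times\Z/p^{a}\Z \to R_i$ and compose with the colimit leg $R_i \to R$. The only cosmetic difference is that you prove the pointwise bound $cR(p) \le cR_i(p)$ for each $i$ and then pass to the infimum, whereas the paper directly picks the index realizing the minimum; your closing remark that the inequality can be strict is also made in the paper, via the pushout example following the lemma.
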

    \begin{proof}
        Let $m_p := \min\limits_{i \in I} cR_i(p)$.
        Then there exists $i \in I$ such that we have a morphism $\Z[1/p] \times \Z/p^{m_p}\Z \to R_i$. Composing with the canonical map $R_i \to R$ induced by the colimit gives a morphism into $R$, and hence the inequality follows.
    \end{proof}

    Notably, for arbitrary colimits, this inequality can not be made into an equality.
    Consider the following pushout diagram, where the arrows are the projections onto the even and odd-indexed components, respectively:
    $$
    \begin{tikzcd}
    \Z^\N \arrow[d, two heads] \arrow[r, two heads] & \Z^{2\N+1} \\
    \Z^{2\N}                                        &           
    \end{tikzcd}
    $$
    The colimit of this diagram is $\Z^{2\N} \otimes_{\Z^\N} \Z^{2\N+1} = 0$, because the image of the element $(0,1,0,1,\dots)$ forces any ring $A$ making the square commute to satisfy $0 = 1$, and hence $A = 0$.\\

    \begin{lemma}
        Let $\{R_i\}_{i \in I}$ be a family of rings forming a diagram whose colimit is $R$.
        Then
        $$
        \max\limits_{i \in I} q_i \leq q.
        $$
    \end{lemma}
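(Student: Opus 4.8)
The plan is to unwind the meaning of the parameter $q$ and reduce the claim to the elementary observation that a nonzero characteristic is inherited by any ring receiving a morphism from one of the $R_i$. Recall that, by the conventions set up in Subsection \ref{ssc:class}, $q_i = 1$ precisely when $\chara(R_i) \neq 0$, and likewise $q = 1$ precisely when $\chara(R) \neq 0$; equivalently, by Lemma \ref{lm:explicit_meaning_fin}, $q_i = 1$ if and only if the finite basic solid ring $\Z/\chara(R_i)\Z$ admits a morphism to $R_i$.

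If $\max_{i \in I} q_i = 0$ there is nothing to prove, since $q \geq 0$. So I would assume $\max_{i \in I} q_i = 1$ and pick $i \in I$ with $q_i = 1$, i.e.\ $c_i := \chara(R_i) \neq 0$. Let $\iota_i \colon R_i \to R$ be the canonical morphism into the colimit. Since $c_i \cdot 1_{R_i} = 0$ in $R_i$, applying $\iota_i$ gives $c_i \cdot 1_R = 0$ in $R$, so $c_i$ lies in the kernel of the unique morphism $\Z \to R$; hence $\chara(R) \mid c_i$. As $c_i \neq 0$ and $0$ divides only $0$, this forces $\chara(R) \neq 0$, i.e.\ $q = 1$, and therefore $\max_{i \in I} q_i \leq q$. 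One can phrase the same step purely in terms of basic solid rings: composing the morphism $\Z/c_i\Z \to R_i$ with $\iota_i$ yields a morphism $\Z/c_i\Z \to R$, and Lemma \ref{lm:explicit_meaning_fin} again gives $\chara(R) \mid c_i$.

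The argument is essentially a one-line diagram chase, so I do not expect any genuine obstacle; the only point to keep in mind is the bookkeeping convention that the zero ring has characteristic $1 \neq 0$, hence still has $q = 1$, so the degenerate case where the colimit collapses to $0$ (as in the pushout example above) does not break the inequality. It is worth noting for contrast that, unlike the situation for limits in Lemma \ref{lm:limit_q}, this inequality is genuinely one-directional: the colimit can acquire nonzero characteristic even when every $R_i$ has characteristic zero — indeed the pushout $\Z^{2\N} \otimes_{\Z^\N} \Z^{2\N+1} = 0$ exhibits exactly this — so no converse should be expected.
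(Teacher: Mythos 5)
Your argument is correct and is essentially the paper's own proof: pick an $R_i$ with $q_i = 1$ and push the resulting morphism from the cyclic ring $\Z/\chara(R_i)\Z$ forward along the canonical map $R_i \to R$ to conclude $\chara(R) \neq 0$. Your version just spells out the divisibility bookkeeping ($\chara(R) \mid c_i$ with $c_i \neq 0$) that the paper leaves implicit.
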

    \begin{proof}
        If any $R_i$ has characteristic not equal to zero (i.e., $q_i = 1$), then a cyclic ring maps into $R$ after composition with the natural map induced by the colimit, implying $q = 1$.
    \end{proof}

    Again, for arbitrary colimits, this inequality can not be made into an equality.
    The counterexample is the same as above.
    It shows a diagram of rings of characteristic zero whose colimit is the characteristic one ring (i.e., the zero ring).\\
    
    A couple of questions are left open after this section:
    \begin{enumerate}
        \item Can an arbitrary coproduct of characteristic zero rings have positive characteristic? And in the affirmative case, what about the coproduct of two rings?
        \item Can we obtain an strict inequality in Lemma \ref{lm:colimit_c} when the colimit is an arbitrary coproduct? And in the affirmative case, what about the coproduct of two rings?
    \end{enumerate}


    \section{Computations}
    \label{sec:compu}
    We conclude our work by computing the cores of some well-known characteristic zero rings.
    Recall that the case of rings with nonzero characteristic has already been settled in Lemma \ref{lem:char_not0_core}.

    \subsection{A characteristic zero field}
    Let $K$ be a field of characteristic zero.  
    Since $\Q \hookrightarrow K$, we have $q_K = 0$ and $cK \equiv 0$.  
    Therefore, the core of $K$ is:
    $$
    cK = \Q.
    $$

    \subsection{The $p$-adic integers $\Z_p$}
    
    Recall that $\Z_p$ is the inverse limit of the system:
    $$
    \cdots \twoheadrightarrow \Z/p^{n+1}\Z \twoheadrightarrow \Z/p^n\Z \twoheadrightarrow
    \cdots \twoheadrightarrow \Z/p^2\Z \twoheadrightarrow \Z/p\Z.
    $$
    For each $n$, the core of $\Z/p^n\Z$ is given by:
    $$
    c(\Z/p^n\Z)(q) =
    \begin{cases}
    0, & \text{if $q \not= p$}\\
    n, & \text{if $q = p$}
    \end{cases}.
    $$
    Applying Lemma~\ref{lm:limit_c}, we obtain:
    $$
    c\Z_p(q) =
    \begin{cases}
    0, & \text{if $q \not= p$}\\
    +\infty, & \text{if $q = p$}
    \end{cases}.
    $$
    Hence, $c\Z_p = \Z_{(p)}$, the localization of $\Z$ at the prime ideal $(p)$.
    
    \subsection{The profinite completion of the integers $\widehat{\Z}$}
    Recall that $\widehat{\Z}$ is the product over all primes:
    $$
    \widehat{\Z} := \prod_{p \in \P} \Z_p.
    $$
    A direct application of Lemma~\ref{lm:limit_c} yields:
    $$
    c\widehat{\Z}(q) = 
    +\infty, \text{ for all } q \in \P,
    $$
    which means:
    $$
    c\widehat{\Z} = \Z.
    $$

    \subsection{A polynomial ring over a characteristic zero ring}
    Let $R$ be a ring of characteristic zero, and let $R[\{X_i\}_{i \in I}]$ be a polynomial ring over $R$ in any collection of indeterminates.

    Suppose there exists a morphism from $\Z[1/p] \times \Z/p^a\Z$ to $R[\{X_i\}_{i \in I}]$. By Lemma~\ref{lm:explicit_meaning}, this would imply the existence of a polynomial $F$ satisfying $pF^2 = F$. Since $R$ has characteristic zero, such an identity forces $F$ to be a degree zero polynomial. Thus, the image of any basic solid ring must lie entirely in the subring $R \subseteq R[\{X_i\}]$.
    
    We conclude that
    $$
    c(R[\{X_i\}_{i \in I}]) = cR.
    $$

    
    \printbibliography
\end{document}